\documentclass[12pt]{article}

\usepackage{amsmath,amssymb,amsthm}
\usepackage{mathtools}
\usepackage{geometry}
\newtheorem{theorem}{Theorem}[section]

\newtheorem{proposition}[theorem]{Proposition}
\newtheorem{remark}[theorem]{Remark}
\newtheorem{definition}[theorem]{Definition}

\newtheorem{corollary}[theorem]{Corollary}

\title{Heat--Airy Transport of Differential Operators and Moving Boundaries}

\author{Gerardo Hern\'andez-del-Valle
\thanks{Centro de Estudios Monetarios Latinoamericanos (CEMLA), Mexico City, Mexico.
E-mail: \texttt{ghernandez@cemla.org}}
}

\begin{document}
\date{}
\maketitle

\begin{abstract}
We study the transport of polynomial differential operators under the
two-parameter commuting evolution
\[
P_{t,s}
=
\exp\left(
\frac{t}{2}D^2-\frac{s}{3}D^3
\right),
\qquad
D=\frac{d}{dx}.
\]
Conjugation of the position operator gives
\[
P_{t,s}xP_{t,s}^{-1}
=
x+tD-sD^2,
\]
which leads to a recursive normal-ordering expansion and to a family of
Heat--Airy polynomials arising as its derivative-free coefficients.

Our main purpose is to study the interaction of this transport with moving
absorbing boundaries. For functions satisfying
\[
v_t=\frac12v_{xx},
\qquad
v_s=-\frac13v_{xxx},
\qquad
v(t,s,f(t,s))=0,
\]
the boundary condition generates a hierarchy of relations among the
spatial jets of \(v\). Combined with the restrictions of transported
differential equations and their spatial derivatives, these identities
produce compatibility equations for \(f\).

We prove a general restriction principle showing that, at every spatial-jet
level, the compatibility equations of the classical Heat problem are
obtained by restricting the corresponding Heat--Airy hierarchy to the
slice \(s=0\). For second-order operators, this framework yields a finite
characteristic system and nonlinear boundary equations, together with
explicit solvable examples. For a third-order linear-potential operator,
the additional Airy-flow identity permits a stronger sequential recovery:
the first Heat--Airy compatibility equation determines \(f_s(t,0)\), and
the next spatial-jet equation then reproduces the nonlinear compatibility
condition obtained in the pure Heat theory from a \(4\times4\)
characteristic determinant. Thus the additional commuting flow reorganizes
the moving-boundary compatibility problem into a hierarchy while retaining
the classical Heat theory as a distinguished restriction.
\end{abstract}

\noindent\textbf{Keywords.}
Heat--Airy semigroup; differential operator transport; moving boundaries;\eject 
\noindent absorbing boundaries; normal ordering; Heat--Airy polynomials;
boundary-jet compatibility.

\medskip

\noindent\textbf{2020 Mathematics Subject Classification.}
Primary 35K05; Secondary 35K25, 47N20.

\section{Introduction}
\label{sec:Introduction}

The transport of differential operators by the heat semigroup provides a
natural connection between polynomial differential equations, polynomial
solution families, and moving-boundary problems. Let
\[
P_t
=
\exp\left(\frac{t}{2}D^2\right),
\qquad
D=\frac{d}{dx}.
\]
Since
\[
P_tDP_t^{-1}=D,
\qquad
P_txP_t^{-1}=x+tD,
\]
conjugation of a polynomial differential operator by \(P_t\) can be
described algebraically by leaving \(D\) unchanged and replacing the
position operator \(x\) by the noncommutative operator \(x+tD\). When this
transport is combined with an absorbing boundary condition, identities
satisfied by the transported equation can be converted into differential
compatibility conditions for the moving boundary.

Several established theories meet naturally in this construction. Heat
polynomials and their associated functions were studied systematically by
Rosenbloom and Widder \cite{Widder}. Higher-order extensions of Hermite
polynomials and their operational representations go back, in particular,
to Gould and Hopper \cite{GouldHopper}; see also Dattoli et al.\
\cite{DattoliHermite}. Higher-order heat-type equations and their
connections with generalized Airy functions and higher-order polynomial
families have likewise been investigated through operational and
integral-transform methods; see G\'orska et al.\ \cite{Gorska}. From the
algebraic point of view, the noncommutativity of multiplication by \(x\)
and differentiation places the resulting expansions within the general
framework of normal ordering; see B{\l}asiak et al.\ \cite{Blasiak} and
the references therein.

A second source of motivation comes from moving-boundary problems for the
heat equation. De Lillo and Fokas \cite{fokas} studied the
Dirichlet-to-Neumann map for the heat equation on a moving boundary. In a
probabilistic setting, such problems arise naturally in the study of
first-passage times for Brownian motion. In \cite{hernandez}, Brownian
hitting-time problems were related to Bessel bridges, heat equations, and
Schr\"odinger-type differential equations. This line of work was developed
further in Hern\'andez-del-Valle and Guerra-Polania \cite{HerGuerra},
where heat equations with moving boundaries were studied directly in
connection with Brownian hitting times.

Higher-order spectral structures have also appeared in this context. In
\cite{pearcey}, zeros of the Pearcey integral were analyzed through heat
evolution together with differential identities inherited from the
underlying higher-order integral representation, leading to a nonlinear
Rayleigh-type equation for the corresponding zero curves. More recently,
the classical heat transport of polynomial differential operators was
organized through compatibility matrices and their recursive structure
\cite{HernandezPreprint}. The present paper continues this program by
promoting the higher-order evolution itself to an independent commuting
flow and studying how the additional flow reorganizes the moving-boundary
compatibility hierarchy.

More precisely, we consider
\[
P_{t,s}
=
\exp\left(
\frac{t}{2}D^2-\frac{s}{3}D^3
\right).
\]
Since \(D^2\) and \(D^3\) commute,
\[
P_{t,s}
=
\exp\left(\frac{t}{2}D^2\right)
\exp\left(-\frac{s}{3}D^3\right),
\]
and the parameters \(t\) and \(s\) describe two commuting evolutions. The
corresponding conjugation of the position operator is
\[
P_{t,s}xP_{t,s}^{-1}
=
X_{t,s}
=
x+tD-sD^2.
\]
Thus the classical transported position operator \(x+tD\) is replaced by
the second-order operator \(x+tD-sD^2\).

The action of \(P_{t,s}\) on monomials produces the polynomial family
\[
\mathcal H_n(x,t,s)
=
P_{t,s}(x^n),
\]
with exponential generating function
\[
\sum_{n=0}^{\infty}
\frac{\mathcal H_n(x,t,s)}{n!}z^n
=
\exp\left(
xz+\frac{t}{2}z^2-\frac{s}{3}z^3
\right).
\]
These polynomials lie naturally within the operational theory of
generalized Hermite and higher-order heat polynomials
\cite{GouldHopper,DattoliHermite,Gorska}. They satisfy
\[
\partial_t\mathcal H_n
=
\frac12D^2\mathcal H_n,
\qquad
\partial_s\mathcal H_n
=
-\frac13D^3\mathcal H_n,
\]
and reduce to the classical heat polynomials when \(s=0\).

For the transport of differential operators, the derivative-free
polynomials \(\mathcal H_n\) constitute only part of the structure. We
therefore consider the complete normal ordering
\[
X_{t,s}^n
=
\sum_{k=0}^{2n}
T_{n,k}(x,t,s)D^k
\]
and derive a recursion for the coefficients \(T_{n,k}\). They form a
triangular array of noncommutative correction terms, whose first column is
\[
T_{n,0}(x,t,s)=\mathcal H_n(x,t,s).
\]
This normal-ordering recursion provides a systematic mechanism for
transporting polynomial differential operators without expanding powers
of \(X_{t,s}\) individually.

The principal object of the paper is the moving-boundary compatibility
structure associated with this transport. Let \(v=v(t,s,x)\) satisfy
\[
v_t=\frac12v_{xx},
\qquad
v_s=-\frac13v_{xxx},
\]
and suppose that
\[
v(t,s,f(t,s))=0.
\]
Differentiation of the absorbing condition along the two commuting flows
gives
\[
v^{(2)}=-2f_tv^{(1)},
\qquad
v^{(3)}=3f_sv^{(1)},
\]
and further differentiation produces relations among successively higher
spatial jets of \(v\). If, in addition,
\[
N_{t,s}v=0,
\qquad
N_{t,s}=P_{t,s}LP_{t,s}^{-1},
\]
then the restrictions
\[
\gamma_f\!\left(D^r(N_{t,s}v)\right)=0,
\qquad
r\geq0,
\]
supply a second hierarchy of relations among the same boundary jets.
Compatibility between these two families imposes differential constraints
on \(f\).

A basic structural result of the paper concerns the relation between this
two-parameter hierarchy and the classical heat problem. Let
\[
F(t)=f(t,0),
\qquad
u(t,x)=v(t,0,x),
\]
and let
\[
N_t^H
=
P_tLP_t^{-1}.
\]
Since
\[
P_{t,0}=P_t,
\qquad
N_{t,0}=N_t^H,
\]
we prove that, for every integer \(r\geq0\),
\[
\left.
\gamma_f\!\left(
D^r(N_{t,s}v)
\right)
\right|_{s=0}
=
\gamma_F\!\left(
D^r(N_t^Hu)
\right).
\]
Thus the spatial-jet hierarchy generated by the transported differential
equation in the classical heat problem is embedded in the corresponding
Heat--Airy hierarchy through the distinguished slice \(s=0\). Together
with the restriction of the moving-boundary identities, this recovers the
classical heat compatibility structure. The restriction principle is
independent of the order of the polynomial differential operator \(L\).

The enlarged hierarchy nevertheless carries additional information.
Indeed, relations involving \(f_s\), such as
\[
v^{(3)}=3f_sv^{(1)},
\]
have no counterpart in the one-parameter heat formulation. Consequently,
information that in the pure heat theory is extracted through simultaneous
elimination of several spatial jets may, for suitable operators, be
resolved successively by using the additional Airy-flow identities. The
general restriction theorem does not assert that such a simplification
always occurs, but it provides the framework in which this possibility can
be analyzed.

For second-order operators, the Heat--Airy transport contains derivatives
of order at most four. The transported equation and the boundary identities
therefore lead to a finite compatibility system for
\[
\left(
v^{(4)},v^{(3)},v^{(2)},v^{(1)}
\right)^{\mathsf T}.
\]
The vanishing of the corresponding compatibility determinant yields a
nonlinear partial differential equation for \(f(t,s)\), whose restriction
to \(s=0\) recovers the corresponding heat compatibility equation.
Explicit specializations show that the first compatibility relation may
determine the \(t\)-dependence of \(f\) only up to an arbitrary function of
\(s\), while the next spatial-jet equation removes the remaining freedom.
A second example shows how regularity at the heat slice and higher
spatial-jet compatibility determine the first-order deformation of a
classical heat boundary in the Airy direction.

The third-order case exhibits a stronger form of this phenomenon. For the
sparse operator
\[
L_3
=
c_{3,0}D^3+c_{0,0}+c_{0,1}x,
\]
the first Heat--Airy compatibility equation is
\[
3c_{3,0}f_s
+
2c_{0,1}s f_t
+
c_{0,1}t
=
0.
\]
This equation can be solved explicitly by characteristics and propagates
the boundary datum \(F(t)=f(t,0)\) into the additional \(s\)-direction. In
particular,
\[
f_s(t,0)
=
-\frac{c_{0,1}}{3c_{3,0}}t.
\]
The zeroth-order coefficient \(c_{0,0}\), which disappears from the first
equation because \(v=0\) on the boundary, reappears at the next spatial-jet
level when
\[
D(N_3v)=0
\]
is restricted to the boundary. Substitution of the preceding expression
for \(f_s(t,0)\) into this second relation yields exactly the nonlinear
compatibility equation obtained independently in the pure heat theory from
the corresponding \(4\times4\) compatibility determinant.

This sequential recovery is the main illustration of the role played by
the additional commuting flow. The Heat--Airy extension does not simply
add another parameter to the classical problem. The additional commuting
flow enlarges the available boundary identities and can reorganize a
simultaneous spatial-jet elimination into successive levels of a
compatibility hierarchy. At the same time, the general restriction
principle ensures that the classical heat compatibility structure remains
embedded in the enlarged theory.

The paper is organized as follows.
Section~\ref{sec:HeatAirySemigroup} introduces the Heat--Airy semigroup and
its conjugation properties.
Section~\ref{sec:HeatAiryPolynomials} studies the associated polynomial
family.
Section~\ref{sec:NormalOrdering} derives the recursive normal-ordering
expansion and applies it to polynomial differential operators.
Section~\ref{sec:MovingBoundary} develops the moving-boundary spatial-jet
relations and the second-order compatibility system.
Section~\ref{sec:ExplicitBoundaries} analyzes explicit solvable examples.
Section~\ref{sec:ThirdOrder} establishes the general restriction principle
and studies the third-order sequential-recovery phenomenon.
Finally, Section~\ref{sec:Conclusion} discusses higher commuting flows and
possible extensions.

\section{The Heat--Airy semigroup}
\label{sec:HeatAirySemigroup}

The classical heat semigroup provides a natural mechanism for transporting
functions and differential operators. In the present setting, we enlarge
this construction by adjoining a commuting evolution generated by the
third spatial derivative. The resulting two-parameter family will provide
the analytic and algebraic framework used throughout the paper.

Let
\[
D=\frac{d}{dx}.
\]

\begin{definition}
The \emph{Heat--Airy semigroup} is the two-parameter family of operators
\[
P_{t,s}
=
\exp\left(
\frac{t}{2}D^2-\frac{s}{3}D^3
\right),
\qquad
t,s\geq0.
\]
\end{definition}
Since \(D^2\) and \(D^3\) commute, the two flows factorize:
\[
P_{t,s}
=
\exp\left(\frac{t}{2}D^2\right)
\exp\left(-\frac{s}{3}D^3\right)
=
\exp\left(-\frac{s}{3}D^3\right)
\exp\left(\frac{t}{2}D^2\right).
\]

This immediately gives the two-parameter semigroup property.

\begin{proposition}
For \(t_1,t_2,s_1,s_2\geq0\),
\[
P_{t_1,s_1}P_{t_2,s_2}
=
P_{t_1+t_2,s_1+s_2}.
\]
Moreover,
\[
\partial_tP_{t,s}
=
\frac12D^2P_{t,s},
\qquad
\partial_sP_{t,s}
=
-\frac13D^3P_{t,s}.
\]
\end{proposition}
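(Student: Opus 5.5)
The plan is to treat $P_{t,s}$ as the exponential $\exp(A(t,s))$ with generator
\[
A(t,s)=\frac{t}{2}D^2-\frac{s}{3}D^3 .
\]
Both claims should then follow from the fact that all generators $A(t,s)$ lie in the commutative algebra of constant-coefficient polynomials in $D$. I will work at the level of formal power series in $D$. Concretely, $P_{t,s}=\sum_{m\ge0}A(t,s)^m/m!$ acts on polynomials, where the sum is finite, or on the exponentials $e^{xz}$ via the symbol $\exp(\frac t2 z^2-\frac s3 z^3)$. In this setting every manipulation is an identity in the ring $\mathbb{R}[t,s][[D]]$ of formal power series in the single commuting variable $D$.

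\textbf{Semigroup law.} First I note that $A(t_1,s_1)$ and $A(t_2,s_2)$ commute, since both are polynomials in $D$. I then apply the standard identity $e^{A}e^{B}=e^{A+B}$ for commuting $A,B$. Its proof is the binomial expansion of $(A+B)^m$ followed by a Cauchy product of the two exponential series; the rearrangement is legitimate formally, and it is also legitimate on any polynomial, where only finitely many terms are nonzero. Using
\[
A(t_1,s_1)+A(t_2,s_2)=A(t_1+t_2,\,s_1+s_2),
\]
this gives $P_{t_1,s_1}P_{t_2,s_2}=P_{t_1+t_2,s_1+s_2}$. The factorization stated just before the proposition is the special case $(t_1,s_1)=(t,0)$, $(t_2,s_2)=(0,s)$.

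\textbf{Derivative formulas.} I differentiate the series term by term in $t$. Since $\partial_t A=\tfrac12 D^2$ commutes with $A$, we have
\[
\partial_t A^m = m A^{m-1}\cdot\tfrac12 D^2 .
\]
Reindexing the sum then gives $\partial_t P_{t,s}=\tfrac12 D^2 P_{t,s}$. An alternative route is through the semigroup law itself: $P_{t+h,s}-P_{t,s}=(P_{h,0}-I)P_{t,s}$, and $(P_{h,0}-I)/h\to\tfrac12 D^2$ coefficientwise. The $s$-derivative is handled identically, with $\partial_s A=-\tfrac13 D^3$, giving $\partial_s P_{t,s}=-\tfrac13 D^3 P_{t,s}$.

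\textbf{Main obstacle.} The only delicate point is analytic. The operators $D^2$ and $-D^3$ are unbounded, so the "exponential" needs a precise meaning on whatever function space is intended. Moreover $e^{-\frac s3 D^3}$ only makes sense for suitable classes of functions, for instance polynomials, exponentials $e^{xz}$, or Fourier transforms with symbol $e^{\frac{i s}{3}\xi^3}$. I will sidestep this by proving the identities formally, and then observing that they hold verbatim on polynomials, where the series terminate, and on $e^{xz}$, where they reduce to the scalar identity $e^{a}e^{b}=e^{a+b}$ for the symbols. This covers every use made of $P_{t,s}$ later in the paper, in particular on $x^n$ and through the generating function.
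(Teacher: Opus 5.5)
Your proposal is correct and follows the paper's proof: the generators $\frac{t}{2}D^2-\frac{s}{3}D^3$ are polynomials in $D$ and so commute, which gives $e^{A}e^{B}=e^{A+B}$ and hence the semigroup law, and the derivative formulas follow by differentiating the exponential in $t$ and $s$. Your formal-power-series setting, together with the remark that the series terminate on polynomials and reduce to the scalar symbol on $e^{xz}$, adds rigor the paper leaves implicit, but it is the same route.
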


\begin{proof}
Since
\[
[D^2,D^3]=0,
\]
we have
\[
\begin{aligned}
P_{t_1,s_1}P_{t_2,s_2}
&=
\exp\left(
\frac{t_1}{2}D^2-\frac{s_1}{3}D^3
\right)
\exp\left(
\frac{t_2}{2}D^2-\frac{s_2}{3}D^3
\right)
\\
&=
\exp\left(
\frac{t_1+t_2}{2}D^2
-
\frac{s_1+s_2}{3}D^3
\right)
\\
&=
P_{t_1+t_2,s_1+s_2}.
\end{aligned}
\]
The differential identities follow directly by differentiation with
respect to the corresponding parameters.
\end{proof}

The Fourier representation makes the interaction between the two flows
particularly transparent. With the convention
\[
\widehat{f}(\lambda)
=
\int_{\mathbb R}e^{-i\lambda x}f(x)\,dx,
\]
the operator \(D\) is represented by multiplication by \(i\lambda\).
Consequently,
\[
\widehat{P_{t,s}f}(\lambda)
=
\exp\left(
-\frac12\lambda^2t
+
i\frac{\lambda^3}{3}s
\right)
\widehat f(\lambda).
\]

This leads to the following kernel.

\begin{definition}
The \emph{Heat--Airy kernel} is
\[
K(t,s,x)
=
\frac{1}{2\pi}
\int_{\mathbb R}
\exp\left(
i\lambda x
-\frac12\lambda^2t
+i\frac{\lambda^3}{3}s
\right)
\,d\lambda.
\]
\end{definition}

For \(t>0\), the Gaussian factor provides decay in the Fourier variable,
while the cubic term contributes an oscillatory phase. The kernel satisfies
the pair of commuting evolution equations
\[
K_t=\frac12K_{xx},
\qquad
K_s=-\frac13K_{xxx}.
\]

Accordingly, for suitable functions \(f\),
\[
(P_{t,s}f)(x)
=
\int_{\mathbb R}
K(t,s,x-y)f(y)\,dy.
\]

The central algebraic object in what follows is obtained by conjugating
the multiplication operator \(x\) by \(P_{t,s}\).

\begin{proposition}
\label{prop:TransportedPosition}
The position operator satisfies
\[
P_{t,s}\,x\,P_{t,s}^{-1}
=
x+tD-sD^2.
\]
\end{proposition}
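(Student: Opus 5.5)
The plan is to compute the commutator of the position operator with the generator of the semigroup, and then observe that the adjoint series terminates. Write
\[
A=\frac{t}{2}D^2-\frac{s}{3}D^3,
\qquad
P_{t,s}=e^{A}.
\]
Formally, the conjugation is given by
\[
e^{A}xe^{-A}=\sum_{k\ge0}\frac{1}{k!}\operatorname{ad}_A^k(x).
\]

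The first ingredient is the basic Weyl relation \([D,x]=1\). By induction it gives \([D^m,x]=mD^{m-1}\). Applying this to the generator,
\[
[A,x]=\frac{t}{2}\cdot 2D-\frac{s}{3}\cdot 3D^2=tD-sD^2.
\]
This commutator is a polynomial in \(D\) alone, so it commutes with \(A\). Hence
\[
\operatorname{ad}_A^2(x)=0,
\]
the series truncates after two terms, and \(e^{A}xe^{-A}=x+tD-sD^2\).

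To avoid relying on the formal series, I would give a rigorous route instead. Fix \(s,t\) and set
\[
G(\tau)=e^{\tau A}xe^{-\tau A}.
\]
Differentiating gives
\[
G'(\tau)=e^{\tau A}[A,x]e^{-\tau A}=[A,x]=tD-sD^2,
\]
where the last equality uses that \(e^{\tau A}\) commutes with polynomials in \(D\). Integrating from \(0\) to \(1\) with \(G(0)=x\) then yields the claim.

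Alternatively, one can check the identity on the dense family \(e^{xz}\), on which \(P_{t,s}\) acts by multiplication by \(e^{tz^2/2-sz^3/3}\). The inverse \(P_{t,s}^{-1}\) multiplies by the reciprocal factor. For \(\phi(z)=tz^2/2-sz^3/3\), and using \(x\,e^{xz}=\partial_z e^{xz}\),
\[
P_{t,s}\,x\,e^{-\phi(z)}e^{xz}
=\partial_z\!\left(e^{-\phi(z)}e^{\phi(z)}e^{xz}\right)
-\left(\partial_z e^{-\phi(z)}\right)e^{\phi(z)}e^{xz}
=(x+\phi'(z))e^{xz}.
\]
Here \(\phi'(z)=tz-sz^2\), which acts on \(e^{xz}\) as \(tD-sD^2\). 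This also fixes the sign.

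The main obstacle is the meaning of \(P_{t,s}^{-1}\): the backward heat flow is not bounded. I would therefore state the identity on polynomials, or on exponentials and functions with compactly supported Fourier transform. On such a domain both \(P_{t,s}\) and its inverse \(\exp(-A)\) are well defined as finite sums or Fourier multipliers. The identity there is exactly the Fourier-side statement that conjugating \(i\partial_\lambda\) by multiplication by \(e^{-\lambda^2t/2+i\lambda^3s/3}\) adds its logarithmic derivative.
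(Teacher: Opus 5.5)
Your proof is correct and follows essentially the same route as the paper: you compute $[A,x]=tD-sD^2$ from $[D^m,x]=mD^{m-1}$, observe that this commutes with $A$ so that $[A,[A,x]]=0$, and conclude that $e^{A}xe^{-A}=x+[A,x]$. The $G(\tau)$ derivation, the check on $e^{xz}$, and the caveat that $P_{t,s}^{-1}$ should be read on polynomials or band-limited functions are sound additions that the paper does not spell out.
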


\begin{proof}
Set
\[
A=\frac{t}{2}D^2-\frac{s}{3}D^3.
\]
Then
\[
P_{t,s}xP_{t,s}^{-1}
=
e^Axe^{-A}.
\]
Using the commutation relation
\[
[D,x]=1,
\]
we obtain
\[
[D^2,x]=2D,
\qquad
[D^3,x]=3D^2.
\]
Therefore
\[
[A,x]
=
\frac{t}{2}[D^2,x]
-
\frac{s}{3}[D^3,x]
=
tD-sD^2.
\]
Since \(A\) is a polynomial in \(D\),
\[
[A,D]=[A,D^2]=0,
\]
and hence
\[
[A,[A,x]]=0.
\]
The conjugation expansion therefore terminates after the first
commutator:
\[
e^Axe^{-A}
=
x+[A,x].
\]
Consequently,
\[
P_{t,s}xP_{t,s}^{-1}
=
x+tD-sD^2.
\]
\end{proof}

\begin{definition}
We denote the transported position operator by
\[
X_{t,s}
=
x+tD-sD^2.
\]
\end{definition}

The derivative operator is invariant under the same conjugation:
\[
P_{t,s}DP_{t,s}^{-1}=D,
\]
since \(D\) commutes with both \(D^2\) and \(D^3\). Thus the Heat--Airy
transport acts on polynomial differential operators by leaving \(D\)
unchanged and replacing the position operator \(x\) by \(X_{t,s}\).

On the distinguished slice \(s=0\),
\[
P_{t,0}
=
\exp\left(\frac{t}{2}D^2\right),
\qquad
X_{t,0}=x+tD,
\]
so the classical heat transport is recovered. On the other hand, the
additional \(s\)-flow introduces the second-order correction
\[
-sD^2,
\]
which is responsible for the higher spatial derivatives that appear in
the transported operators studied below.

In the next section we examine the polynomial family generated by
\(P_{t,s}\). These polynomials provide the derivative-free component of
the normal-ordering structure that will subsequently be used to transport
general polynomial differential operators.

\section{Heat--Airy polynomials}
\label{sec:HeatAiryPolynomials}

The Heat--Airy semigroup introduced in the preceding section naturally
generates a polynomial family extending the classical heat polynomials.
These polynomials will also appear as the derivative-free coefficients
in the normal-ordering expansion developed in the next section.

\begin{definition}
For every integer \(n\geq0\), the \(n\)-th \emph{Heat--Airy polynomial}
is defined by
\[
\mathcal H_n(x,t,s)
=
P_{t,s}(x^n).
\]
\end{definition}

The conjugation formula of Proposition~\ref{prop:TransportedPosition}
immediately gives an operational representation.

\begin{proposition}
For every integer \(n\geq0\),
\[
\mathcal H_n(x,t,s)
=
X_{t,s}^{\,n}1
=
(x+tD-sD^2)^n1.
\]
\end{proposition}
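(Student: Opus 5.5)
The plan is to derive the identity from the conjugation formula of Proposition~\ref{prop:TransportedPosition} together with the fact that \(P_{t,s}\) fixes constants. Write \(x^n = x^n\cdot 1\), that is, apply the multiplication operator \(x^n\) to the constant function \(1\). Then
\[
\mathcal H_n(x,t,s)=P_{t,s}(x^n)=P_{t,s}\,x^n\,1
=\bigl(P_{t,s}\,x\,P_{t,s}^{-1}\bigr)^n P_{t,s}1 ,
\]
where the last step inserts \(P_{t,s}^{-1}P_{t,s}\) between consecutive factors of \(x\), giving \(P_{t,s}x^nP_{t,s}^{-1}=(P_{t,s}xP_{t,s}^{-1})^n\). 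The middle factor equals \(X_{t,s}^{\,n}\) by Proposition~\ref{prop:TransportedPosition}. It then remains to observe that \(P_{t,s}1=1\): the exponential series \(\sum_k \frac{1}{k!}\bigl(\frac t2D^2-\frac s3D^3\bigr)^k\) acts on \(1\) through its \(k=0\) term only, since every positive power of the generator contains at least one \(D\) and kills constants.

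The one step needing care is the meaning of \(P_{t,s}^{-1}\) and of the conjugation identity on the space where we work. I would restrict everything to the space \(\mathbb C[x]\) of polynomials, or more precisely to the finite-dimensional spaces of polynomials of degree at most \(N\). On \(\mathbb C[x]\), \(D\) is locally nilpotent, so \(P_{t,s}=e^A\) with \(A=\frac t2D^2-\frac s3D^3\) is given by a finite sum on each polynomial. Its inverse is \(e^{-A}\), which is well defined for all real \(t,s\), not only for \(t,s\ge0\). The commutator argument in the proof of Proposition~\ref{prop:TransportedPosition} is purely algebraic, so it holds verbatim as an operator identity on \(\mathbb C[x]\).

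Since \(x\) maps degree-\(\le N\) polynomials into degree \(\le N+1\) and \(P_{t,s}^{\pm1}\) preserve degree, all compositions above are legitimate on \(\mathbb C[x]\). As a sanity check, I would verify the case \(n=1\) directly: \(X_{t,s}1=x\), and indeed \(P_{t,s}x=x\) because \(D^2x=0\). Alternatively, the identity can be organised as an induction on \(n\) using \(P_{t,s}(x\cdot x^{n-1})=X_{t,s}P_{t,s}(x^{n-1})\), which is just \(P_{t,s}x=X_{t,s}P_{t,s}\).

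I do not expect a real obstacle. The only point worth stating explicitly is this algebraic setting for \(P_{t,s}^{-1}\), so that one never needs the integral kernel or growth conditions.
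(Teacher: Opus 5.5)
Your proposal is correct and is essentially the paper's argument: the paper writes $\mathcal H_n=P_{t,s}x^nP_{t,s}^{-1}1=(P_{t,s}xP_{t,s}^{-1})^n1$ using $P_{t,s}^{-1}1=1$, whereas you insert $P_{t,s}^{-1}P_{t,s}$ and use $P_{t,s}1=1$, which is the same step. Your added remark that everything can be done on $\mathbb C[x]$, where $D$ is locally nilpotent and $P_{t,s}^{-1}=e^{-A}$ is well defined, usefully makes rigorous a point the paper leaves implicit.
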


\begin{proof}
Since \(P_{t,s}^{-1}1=1\),
\[
\begin{aligned}
\mathcal H_n(x,t,s)
&=
P_{t,s}x^nP_{t,s}^{-1}1
\\
&=
\left(
P_{t,s}xP_{t,s}^{-1}
\right)^n1
\\
&=
X_{t,s}^{\,n}1.
\end{aligned}
\]
The result follows from
\[
X_{t,s}=x+tD-sD^2.
\]
\end{proof}

The basic properties of the sequence are most conveniently encoded in
its exponential generating function.

\begin{theorem}[Generating function]
\label{thm:HeatAiryGeneratingFunction}
The Heat--Airy polynomials satisfy
\[
\sum_{n=0}^{\infty}
\frac{\mathcal H_n(x,t,s)}{n!}z^n
=
\exp\left(
xz+\frac{t}{2}z^2-\frac{s}{3}z^3
\right).
\]
\end{theorem}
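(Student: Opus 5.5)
The natural approach is to apply the definition \(\mathcal H_n=P_{t,s}(x^n)\) termwise to the exponential generating function of the monomials, \(\sum_n x^nz^n/n!=e^{xz}\). The key observation is that \(e^{xz}\) is an eigenfunction of \(D\), with \(De^{xz}=ze^{xz}\). Consequently, any power series in \(D\) acts on it by substituting \(z\) for \(D\):
\[
P_{t,s}e^{xz}
=
\exp\left(\frac{t}{2}z^2-\frac{s}{3}z^3\right)e^{xz}.
\]
If we can justify interchanging \(P_{t,s}\) with the summation over \(n\), the generating function follows immediately.

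To avoid any analytic interchange issues, I would work at the level of formal power series in \(z\) and compare coefficients. Concretely, I would compute \(\mathcal H_n\) directly. Since \(D^{m}x^n=0\) for \(m>n\), the exponential series defining \(P_{t,s}(x^n)\) is a finite sum:
\[
\mathcal H_n
=
\sum_{j,k\geq0}
\frac{(t/2)^j(-s/3)^k}{j!\,k!}\,D^{2j+3k}x^n
=
\sum_{2j+3k\leq n}
\frac{(t/2)^j(-s/3)^k}{j!\,k!}\,\frac{n!}{(n-2j-3k)!}\,x^{n-2j-3k}.
\]
Here I use the factorization \(P_{t,s}=\exp(\tfrac t2D^2)\exp(-\tfrac s3D^3)\) established before Proposition 2.1. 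Dividing by \(n!\) and summing against \(z^n\), the substitution \(m=n-2j-3k\) turns the triple sum into the product
\[
\sum_m \frac{(xz)^m}{m!}
\cdot
\sum_j \frac{(tz^2/2)^j}{j!}
\cdot
\sum_k \frac{(-sz^3/3)^k}{k!}.
\]
This is exactly the stated exponential.

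The only point needing care is the reindexing step: for each fixed \(n\) the sum is finite, and the map \((m,j,k)\mapsto n=m+2j+3k\) is a bijection onto the index set. The identity therefore holds as formal power series in \(z\). It also holds as a genuine convergent identity for all complex \(z\), since the right-hand side is entire in \(z\) and each factor series converges absolutely, which justifies the rearrangement.

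I do not expect a real obstacle here; the main bookkeeping is the triple-index rearrangement. As a consistency check, I would note that the result agrees with the operational representation \(\mathcal H_n=X_{t,s}^n1\) of the preceding proposition. Indeed, at \(s=0\) it reduces to the classical heat-polynomial generating function \(e^{xz+tz^2/2}\). Moreover, differentiating the right-hand side in \(t\) and \(s\) reproduces \(\partial_t=\tfrac12D^2\) and \(\partial_s=-\tfrac13D^3\) coefficientwise.
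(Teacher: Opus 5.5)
Your argument is correct, but the proof you actually carry out is not the paper's. The paper uses only the eigenfunction argument from your first paragraph. It writes $\sum_n \mathcal H_n z^n/n! = P_{t,s}(e^{zx})$ ``by definition'' and then uses $D^k e^{zx}=z^k e^{zx}$, so the interchange of $P_{t,s}$ with the series in $n$ is taken for granted.

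You instead expand $P_{t,s}(x^n)$ as a terminating double sum over $2j+3k\le n$ and regroup an absolutely convergent triple series. In effect you first prove the explicit representation of Theorem~\ref{thm:HeatAiryExplicit} directly from the definition and then resum it. The paper runs that implication the other way, deducing the explicit formula afterwards from the generating function.

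The paper's route is shorter and shows the general mechanism: any $\exp(A(D))$ acts on $e^{zx}$ by replacing $D$ with $z$. Your route supplies exactly the justification of the interchange that the paper omits, at the cost of index bookkeeping, and it yields Theorem~\ref{thm:HeatAiryExplicit} as a by-product.

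One wording point: the map $(m,j,k)\mapsto m+2j+3k$ is not a bijection. What you actually use is that $(n,j,k)\mapsto(n-2j-3k,j,k)$ is a bijection from $\{(n,j,k):2j+3k\le n\}$ onto $\{(m,j,k):m,j,k\ge 0\}$. Equivalently, the finite groups indexed by $n$ partition the triple index set.
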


\begin{proof}
By definition,
\[
\begin{aligned}
\sum_{n=0}^{\infty}
\frac{\mathcal H_n(x,t,s)}{n!}z^n
&=
P_{t,s}
\left(
\sum_{n=0}^{\infty}\frac{(zx)^n}{n!}
\right)
\\
&=
P_{t,s}(e^{zx}).
\end{aligned}
\]
Since
\[
D^ke^{zx}=z^ke^{zx},
\]
we obtain
\[
\begin{aligned}
P_{t,s}(e^{zx})
&=
\exp\left(
\frac{t}{2}D^2-\frac{s}{3}D^3
\right)e^{zx}
\\
&=
\exp\left(
\frac{t}{2}z^2-\frac{s}{3}z^3
\right)e^{zx},
\end{aligned}
\]
which proves the result.
\end{proof}

The classical heat polynomials are recovered on the distinguished slice
\(s=0\).

\begin{corollary}
For every \(n\geq0\),
\[
\mathcal H_n(x,t,0)=H_n(x,t),
\]
where \(H_n(x,t)\) denotes the classical heat polynomial characterized by
\[
\sum_{n=0}^{\infty}
\frac{H_n(x,t)}{n!}z^n
=
\exp\left(
xz+\frac{t}{2}z^2
\right).
\]
\end{corollary}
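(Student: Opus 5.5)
The plan is to derive the corollary directly from Theorem~\ref{thm:HeatAiryGeneratingFunction} by specializing to the slice \(s=0\) and comparing coefficients of the two generating functions.

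Setting \(s=0\) in the theorem gives
\[
\sum_{n=0}^{\infty}\frac{\mathcal H_n(x,t,0)}{n!}z^n
=
\exp\left(xz+\frac{t}{2}z^2\right),
\]
which is exactly the generating function that characterizes \(H_n(x,t)\). Both sides are entire in \(z\), since the exponential of a polynomial is entire. The coefficients of a power series are therefore uniquely determined by its sum, so comparing coefficients of \(z^n\) yields \(\mathcal H_n(x,t,0)=H_n(x,t)\) for every \(n\geq0\).

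If one wants to avoid any convergence questions, the same comparison can be made in the ring of formal power series in \(z\) with polynomial coefficients in \(x,t,s\). In that setting the specialization \(s\mapsto0\) is a ring homomorphism applied coefficientwise, and it commutes with the exponential of a series without constant term. This is the only point that needs care: specializing \(s=0\) inside the identity must be legitimate. Since each \(\mathcal H_n\) is a polynomial (it equals \(X_{t,s}^n1\)), evaluating at \(s=0\) is unproblematic, and the argument involves no real obstacle.

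As an independent check, one can argue operationally instead. By the preceding proposition, \(\mathcal H_n(x,t,0)=(x+tD)^n1=P_t(x^n)\), since \(X_{t,0}=x+tD\) and \(P_{t,0}=P_t\). This is the classical operational definition of the heat polynomials, and it has the same generating function \(\exp(xz+tz^2/2)\) by the same computation \(P_t e^{zx}=e^{tz^2/2}e^{zx}\).
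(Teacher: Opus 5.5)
Your proposal is correct and is the paper's own proof: the paper likewise sets \(s=0\) in the generating function of Theorem~\ref{thm:HeatAiryGeneratingFunction} and compares coefficients of \(z^n\). Your remarks on working with formal power series and the operational check via \(X_{t,0}=x+tD\) are valid additions but not needed.
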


\begin{proof}
The result follows immediately by setting \(s=0\) in the generating
function of Theorem~\ref{thm:HeatAiryGeneratingFunction}.
\end{proof}

The polynomial sequence inherits the two commuting evolution equations
of the Heat--Airy semigroup.

\begin{proposition}
\label{prop:HeatAiryEvolution}
For every \(n\geq0\),
\[
\frac{\partial}{\partial t}\mathcal H_n
=
\frac12D^2\mathcal H_n,
\qquad
\frac{\partial}{\partial s}\mathcal H_n
=
-\frac13D^3\mathcal H_n.
\]
\end{proposition}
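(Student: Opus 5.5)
The plan is to derive both evolution equations from the exponential generating function of Theorem~\ref{thm:HeatAiryGeneratingFunction}. Write
\[
G(x,t,s;z)=\exp\left(xz+\frac{t}{2}z^2-\frac{s}{3}z^3\right)
=\sum_{n=0}^{\infty}\frac{\mathcal H_n(x,t,s)}{n!}z^n .
\]
Differentiating $G$ directly gives $\partial_xG=zG$, hence $D^kG=z^kG$. Also
\[
\partial_tG=\frac{z^2}{2}G=\frac12D^2G,
\qquad
\partial_sG=-\frac{z^3}{3}G=-\frac13D^3G .
\]
So $G$ satisfies the two commuting evolution equations identically in $z$.

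Next I would transfer these identities to the coefficients. Since $G$ is entire in $(x,t,s,z)$, its Taylor expansion in $z$ may be differentiated term by term in $x$, $t$ and $s$. Comparing the coefficients of $z^n/n!$ on both sides of $\partial_tG=\tfrac12D^2G$ and of $\partial_sG=-\tfrac13D^3G$ then gives the stated equations for every $n\ge0$.

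As a cross-check, one can argue purely operationally. By definition $\mathcal H_n=P_{t,s}(x^n)$, and $P_{t,s}$ acts on polynomials as a finite sum, because $D^k x^n=0$ for $k>n$. The earlier proposition gives $\partial_tP_{t,s}=\tfrac12D^2P_{t,s}$ and $\partial_sP_{t,s}=-\tfrac13D^3P_{t,s}$. Applying these to $x^n$ yields the result immediately.

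The only point needing care, and the main potential obstacle, is justifying the coefficientwise differentiation. This is routine: it follows either from the finiteness of the series on polynomials or from the analyticity of $G$.
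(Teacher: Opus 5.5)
Your proof is correct, but your main line differs from the paper's. The paper's proof is exactly what you call your cross-check: it writes $\mathcal H_n=P_{t,s}(x^n)$ and applies $\partial_tP_{t,s}=\frac12D^2P_{t,s}$ and $\partial_sP_{t,s}=-\frac13D^3P_{t,s}$ from the semigroup proposition, with nothing further.

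Your primary argument instead works with the explicit function $G=\exp(xz+\frac t2z^2-\frac s3z^3)$. There the identities $\partial_tG=\frac12D^2G$ and $\partial_sG=-\frac13D^3G$ are one-line computations, and you read off coefficients of $z^n/n!$. This can even be done in the formal power series ring $\mathbb{Q}[x,t,s][[z]]$, so no analytic justification is needed.

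What each route buys:
\begin{itemize}
\item The operational proof is shorter and more structural. It shows the evolution equations are simply inherited from the semigroup, so they hold for $P_{t,s}p$ with any polynomial $p$, not only for monomials.
\item Your generating-function proof avoids differentiating an operator exponential in its parameters at this step. The cost is that it depends on Theorem~\ref{thm:HeatAiryGeneratingFunction}, whose proof in the paper itself passes through $P_{t,s}(e^{zx})$.
\end{itemize}

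Your remark that $P_{t,s}$ acts on $x^n$ as a finite sum, since $D^kx^n=0$ for $k>n$, is exactly what makes the paper's one-line argument rigorous. The paper leaves that point implicit.
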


\begin{proof}
Since
\[
\mathcal H_n=P_{t,s}(x^n),
\]
the identities follow from
\[
\partial_tP_{t,s}
=
\frac12D^2P_{t,s},
\qquad
\partial_sP_{t,s}
=
-\frac13D^3P_{t,s}.
\]
\end{proof}

The sequence is Appell with respect to the spatial variable.

\begin{proposition}[Appell property]
\label{prop:HeatAiryAppell}
For every \(n\geq1\),
\[
D\mathcal H_n(x,t,s)
=
n\mathcal H_{n-1}(x,t,s).
\]
More generally, for \(0\leq k\leq n\),
\[
D^k\mathcal H_n(x,t,s)
=
\frac{n!}{(n-k)!}
\mathcal H_{n-k}(x,t,s).
\]
\end{proposition}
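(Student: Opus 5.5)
The plan is to differentiate the generating function of Theorem~\ref{thm:HeatAiryGeneratingFunction} with respect to \(x\) and compare coefficients. Write
\[
G(x,t,s;z)=\sum_{n=0}^{\infty}\frac{\mathcal H_n(x,t,s)}{n!}z^n=\exp\left(xz+\frac{t}{2}z^2-\frac{s}{3}z^3\right).
\]
Then \(\partial_xG=zG\). Each \(\mathcal H_n\) is a polynomial in \(x\); this follows since \(P_{t,s}\) acts on \(x^n\) by a finite sum of derivatives, or from the operational representation \(X_{t,s}^n1\). The series is entire in \(z\), so termwise differentiation in \(x\) is legitimate. Comparing coefficients of \(z^n/n!\) gives
\[
D\mathcal H_n=n\mathcal H_{n-1}, \qquad n\geq1,
\]
and \(D\mathcal H_0=0\).

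An alternative route avoids analytic questions entirely. Since \(D\) commutes with \(P_{t,s}\), which is a formal power series in \(D\) acting finitely on polynomials, we have
\[
D\mathcal H_n=DP_{t,s}x^n=P_{t,s}Dx^n=nP_{t,s}x^{n-1}=n\mathcal H_{n-1}.
\]
I would present this commutation argument as the main proof, and mention the generating function as a check.

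For the general statement, induct on \(k\). The case \(k=0\) is trivial. If
\[
D^k\mathcal H_n=\frac{n!}{(n-k)!}\mathcal H_{n-k}
\]
holds with \(k<n\), applying \(D\) and the first identity with index \(n-k\geq1\) gives
\[
\frac{n!}{(n-k)!}(n-k)\mathcal H_{n-k-1}=\frac{n!}{(n-k-1)!}\mathcal H_{n-k-1}.
\]
Equivalently, iterate the commutation: \(D^kP_{t,s}x^n=P_{t,s}D^kx^n\).

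There is no real obstacle here. The only point needing care is justifying that \(P_{t,s}\) and \(D\) commute on polynomials. This holds because \(P_{t,s}x^n\) is the terminating sum
\[
\sum_{j}\frac{1}{j!}\left(\frac t2D^2-\frac s3D^3\right)^jx^n,
\]
and \(D\) commutes with each term.
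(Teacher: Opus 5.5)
Your main argument, commuting \(D\) past \(P_{t,s}\) to get \(D\mathcal H_n=P_{t,s}(Dx^n)=n\mathcal H_{n-1}\) and then iterating in \(k\), is exactly the paper's proof. Your observation that \(P_{t,s}\) acts on \(x^n\) by a terminating sum supplies the justification for the commutation that the paper leaves implicit, and the generating-function check is correct but not needed.
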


\begin{proof}
Since \(D\) commutes with \(P_{t,s}\),
\[
\begin{aligned}
D\mathcal H_n
&=
DP_{t,s}(x^n)
\\
&=
P_{t,s}(Dx^n)
\\
&=
nP_{t,s}(x^{n-1})
\\
&=
n\mathcal H_{n-1}.
\end{aligned}
\]
Iteration gives the second identity.
\end{proof}

Combining Proposition~\ref{prop:HeatAiryEvolution} with the Appell
property gives particularly simple lowering formulas in all three
variables.

\begin{corollary}
For every \(n\geq0\),
\[
\frac{\partial}{\partial t}\mathcal H_n(x,t,s)
=
\frac{n(n-1)}{2}
\mathcal H_{n-2}(x,t,s),
\]
where the right-hand side is understood to vanish for \(n<2\), and
\[
\frac{\partial}{\partial s}\mathcal H_n(x,t,s)
=
-\frac{n(n-1)(n-2)}{3}
\mathcal H_{n-3}(x,t,s),
\]
where the right-hand side vanishes for \(n<3\).
\end{corollary}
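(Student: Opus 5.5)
The plan is to combine Proposition~\ref{prop:HeatAiryEvolution} with the iterated Appell identity of Proposition~\ref{prop:HeatAiryAppell}. For the \(t\)-derivative, Proposition~\ref{prop:HeatAiryEvolution} gives
\[
\partial_t\mathcal H_n=\tfrac12 D^2\mathcal H_n .
\]
For \(n\geq2\), we apply Proposition~\ref{prop:HeatAiryAppell} with \(k=2\):
\[
D^2\mathcal H_n=\frac{n!}{(n-2)!}\mathcal H_{n-2}=n(n-1)\mathcal H_{n-2},
\]
and multiplying by \(\tfrac12\) gives the first formula. For the \(s\)-derivative, the same argument applies with \(k=3\), using
\[
\partial_s\mathcal H_n=-\tfrac13D^3\mathcal H_n
\qquad\text{and}\qquad
D^3\mathcal H_n=n(n-1)(n-2)\mathcal H_{n-3}
\quad\text{for } n\geq3 .
\]

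The only point needing care is the degenerate range, where Proposition~\ref{prop:HeatAiryAppell} is stated only for \(k\leq n\). For \(n<2\) (respectively \(n<3\)), we note that \(\mathcal H_n=P_{t,s}(x^n)\) is a polynomial in \(x\) of degree \(n\). This follows either from the generating function in Theorem~\ref{thm:HeatAiryGeneratingFunction} or directly from the fact that \(D\) commutes with \(P_{t,s}\), so that
\[
D^k\mathcal H_n=P_{t,s}(D^kx^n)=0 \qquad\text{for } k>n .
\]
Hence \(D^2\mathcal H_n=0\) for \(n<2\) and \(D^3\mathcal H_n=0\) for \(n<3\), which matches the stated convention that the right-hand sides vanish.

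As a cross-check, one can instead differentiate the generating function of Theorem~\ref{thm:HeatAiryGeneratingFunction} in \(t\) and in \(s\). Differentiating in \(t\) multiplies it by \(z^2/2\), and comparing coefficients of \(z^n/n!\) yields \(\tfrac{n(n-1)}{2}\mathcal H_{n-2}\). Differentiating in \(s\) multiplies it by \(-z^3/3\), which yields \(-\tfrac{n(n-1)(n-2)}{3}\mathcal H_{n-3}\). There is no real obstacle here; the only step requiring attention is the boundary-case bookkeeping just described.
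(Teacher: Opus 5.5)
Your proposal is correct and follows the paper's route: the corollary is obtained by combining the evolution equations of Proposition~\ref{prop:HeatAiryEvolution} with the iterated Appell identity of Proposition~\ref{prop:HeatAiryAppell}. Your explicit handling of the cases \(n<2\) and \(n<3\) via \(D^k\mathcal H_n=P_{t,s}(D^kx^n)=0\) for \(k>n\), and the generating-function cross-check, fill in details the paper leaves implicit.
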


Thus differentiation with respect to \(x,t,s\) lowers the index by one,
two, and three, respectively:
\[
\begin{aligned}
D\mathcal H_n
&=
n\mathcal H_{n-1},
\\
\partial_t\mathcal H_n
&=
\frac{n(n-1)}{2}\mathcal H_{n-2},
\\
\partial_s\mathcal H_n
&=
-\frac{n(n-1)(n-2)}{3}\mathcal H_{n-3}.
\end{aligned}
\]

The generating function also gives the corresponding raising relation.

\begin{theorem}[Recurrence relation]
\label{thm:HeatAiryRecurrence}
For every integer \(n\geq2\),
\[
\mathcal H_{n+1}(x,t,s)
=
x\mathcal H_n(x,t,s)
+
nt\,\mathcal H_{n-1}(x,t,s)
-
sn(n-1)\mathcal H_{n-2}(x,t,s).
\]
\end{theorem}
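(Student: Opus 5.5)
The plan is to differentiate the exponential generating function of Theorem~\ref{thm:HeatAiryGeneratingFunction} with respect to \(z\) and compare coefficients. Write
\[
G(z)=\exp\left(xz+\frac{t}{2}z^2-\frac{s}{3}z^3\right)=\sum_{n\ge0}\frac{\mathcal H_n}{n!}z^n .
\]
Then \(G\) satisfies the first-order linear ODE
\[
\partial_zG=(x+tz-sz^2)\,G .
\]

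The left-hand side is \(\sum_{n\ge0}\mathcal H_{n+1}z^n/n!\). On the right-hand side the three terms expand as
\[
xG=\sum_{n\ge 0}\frac{x\mathcal H_n}{n!}z^n,\qquad
tzG=\sum_{n\ge1}\frac{t\mathcal H_{n-1}}{(n-1)!}z^n=\sum_{n\ge1}\frac{nt\,\mathcal H_{n-1}}{n!}z^n,
\]
and
\[
sz^2G=\sum_{n\ge2}\frac{s\,\mathcal H_{n-2}}{(n-2)!}z^n=\sum_{n\ge2}\frac{sn(n-1)\mathcal H_{n-2}}{n!}z^n .
\]
Multiplying the coefficient of \(z^n\) on each side by \(n!\) gives the stated recurrence for all \(n\ge2\). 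This is the range where all three shifts are present; for \(n=0,1\) one gets the truncated versions, which also follow.

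As an independent check, one can use the operational form \(\mathcal H_{n+1}=X_{t,s}\mathcal H_n=(x+tD-sD^2)\mathcal H_n\). This follows from \(\mathcal H_n=X_{t,s}^n1\). Applying the Appell property (Proposition~\ref{prop:HeatAiryAppell}) gives
\[
D\mathcal H_n=n\mathcal H_{n-1},\qquad D^2\mathcal H_n=n(n-1)\mathcal H_{n-2},
\]
which yields the same identity at once.

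There is no real obstacle. The only care needed is the index shift in the \(z^2\) term, namely the factor \(n!/(n-2)!=n(n-1)\), and keeping track of the sign coming from the \(-s/3\) coefficient: differentiating \(-\frac{s}{3}z^3\) gives \(-sz^2\). I would present the operational argument as the main proof, since it is one line given the earlier propositions, and mention the generating-function comparison as confirmation.
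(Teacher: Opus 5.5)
Your generating-function argument is exactly the paper's proof: it differentiates \(G\) in \(z\), uses \(\partial_zG=(x+tz-sz^2)G\), and compares coefficients of \(z^n\), with the same index shifts you spell out.

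The operational route, which you say you would promote to the main proof, is also correct and genuinely different. Writing \(\mathcal H_{n+1}=X_{t,s}\mathcal H_n\) and applying Proposition~\ref{prop:HeatAiryAppell} gives \(x\mathcal H_n+nt\,\mathcal H_{n-1}-sn(n-1)\mathcal H_{n-2}\) in one line, with no coefficient bookkeeping. It also shows that the recurrence is just the \(k=0\) case of the normal-ordering recursion of Theorem~\ref{Thm:NormalOrdering}, once \(T_{n,0}=\mathcal H_n\) is known.

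The cost is that it depends on two earlier results: the operational representation \(\mathcal H_n=X_{t,s}^n1\), and the Appell property, which rests on \(D\) commuting with \(P_{t,s}\). The generating-function proof, by contrast, is self-contained once Theorem~\ref{thm:HeatAiryGeneratingFunction} is available. Both arguments also give the truncated cases \(n=0,1\) under the obvious conventions.
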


\begin{proof}
Let
\[
G(x,t,s;z)
=
\exp\left(
xz+\frac{t}{2}z^2-\frac{s}{3}z^3
\right).
\]
Differentiating with respect to \(z\) gives
\[
\frac{\partial G}{\partial z}
=
(x+tz-sz^2)G.
\]
On the other hand,
\[
\frac{\partial G}{\partial z}
=
\sum_{n=0}^{\infty}
\frac{\mathcal H_{n+1}(x,t,s)}{n!}z^n.
\]
Comparing coefficients of \(z^n\) yields the stated recurrence.
\end{proof}

Together with
\[
\mathcal H_0=1,
\qquad
\mathcal H_1=x,
\qquad
\mathcal H_2=x^2+t,
\]
the recurrence determines the entire sequence. For example,
\[
\mathcal H_3
=
x^3+3tx-2s,
\]
and
\[
\mathcal H_4
=
x^4+6tx^2+3t^2-8sx.
\]

The Appell structure also yields a translation formula.

\begin{proposition}[Addition formula]
For every integer \(n\geq0\),
\[
\mathcal H_n(x+y,t,s)
=
\sum_{k=0}^{n}
\binom{n}{k}
\mathcal H_k(x,t,s)y^{\,n-k}.
\]
\end{proposition}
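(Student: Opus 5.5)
The addition formula follows most directly from the exponential generating function of Theorem~\ref{thm:HeatAiryGeneratingFunction}. Replacing \(x\) by \(x+y\) there gives
\[
\sum_{n=0}^{\infty}\frac{\mathcal H_n(x+y,t,s)}{n!}z^n
=
\exp\left(xz+\frac{t}{2}z^2-\frac{s}{3}z^3\right)e^{yz}.
\]
The plan is to expand both factors on the right as power series in \(z\) and form their Cauchy product:
\[
\left(\sum_{k=0}^{\infty}\frac{\mathcal H_k(x,t,s)}{k!}z^k\right)\left(\sum_{j=0}^{\infty}\frac{y^j}{j!}z^j\right)
=
\sum_{n=0}^{\infty}\left(\sum_{k=0}^{n}\frac{\mathcal H_k(x,t,s)\,y^{n-k}}{k!\,(n-k)!}\right)z^n.
\]
Comparing the coefficients of \(z^n/n!\) on the two sides then gives the stated binomial sum.

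The comparison of coefficients is legitimate in either of two settings. One may regard both sides as formal power series in \(z\) with polynomial coefficients. Alternatively, for fixed \(x,t,s,y\) both sides are entire functions of \(z\), so their Taylor coefficients agree. Either way there is no real obstacle here.

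As a cross-check, or as an alternative proof, one can use the Appell property of Proposition~\ref{prop:HeatAiryAppell}. Since \(\mathcal H_n(\cdot,t,s)\) is a polynomial of degree \(n\) in \(x\), Taylor's formula gives
\[
\mathcal H_n(x+y,t,s)=\sum_{j=0}^{n}\frac{y^j}{j!}D^j\mathcal H_n(x,t,s),
\]
with the expansion being exact. The Appell property gives
\[
D^j\mathcal H_n=\frac{n!}{(n-j)!}\,\mathcal H_{n-j}.
\]
Substituting this and reindexing with \(k=n-j\) yields the same formula.

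A third route is operator-theoretic. Translation \(e^{yD}\) commutes with \(P_{t,s}\), so
\[
\mathcal H_n(x+y,t,s)=P_{t,s}\bigl((x+y)^n\bigr).
\]
Expanding \((x+y)^n\) by the binomial theorem and applying \(P_{t,s}\) termwise gives the result.
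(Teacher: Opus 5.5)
Your proposal is correct, and your main argument is the paper's proof: substitute \(x+y\) into the generating function, split off the factor \(e^{yz}\), and compare coefficients of the Cauchy product. Your two alternative routes are also valid, namely Taylor expansion combined with the Appell property, and the fact that \(e^{yD}\) commutes with \(P_{t,s}\).
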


\begin{proof}
Using the generating function,
\[
\begin{aligned}
\sum_{n=0}^{\infty}
\frac{\mathcal H_n(x+y,t,s)}{n!}z^n
&=
e^{zy}
\exp\left(
xz+\frac{t}{2}z^2-\frac{s}{3}z^3
\right)
\\
&=
\left(
\sum_{m=0}^{\infty}\frac{y^m}{m!}z^m
\right)
\left(
\sum_{k=0}^{\infty}
\frac{\mathcal H_k(x,t,s)}{k!}z^k
\right).
\end{aligned}
\]
Comparison of coefficients gives the result.
\end{proof}

Finally, the generating function provides an explicit finite expression
for every polynomial in the family.

\begin{theorem}[Explicit representation]
\label{thm:HeatAiryExplicit}
For every integer \(n\geq0\),
\[
\mathcal H_n(x,t,s)
=
n!
\sum_{\substack{a,b,c\geq0\\a+2b+3c=n}}
\frac{x^a}{a!}
\frac{\left(\frac{t}{2}\right)^b}{b!}
\frac{\left(-\frac{s}{3}\right)^c}{c!}.
\]
Equivalently,
\[
\mathcal H_n(x,t,s)
=
\sum_{\substack{a,b,c\geq0\\a+2b+3c=n}}
\frac{n!(-1)^c}
{a!\,b!\,c!\,2^b3^c}
x^at^bs^c.
\]
\end{theorem}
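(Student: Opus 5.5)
The plan is to read off the coefficient of \(z^n\) in the exponential generating function of Theorem~\ref{thm:HeatAiryGeneratingFunction}. The exponential factorizes as a product of three commuting exponentials,
\[
\exp\left(xz+\frac{t}{2}z^2-\frac{s}{3}z^3\right)
=
e^{xz}\,e^{\frac{t}{2}z^2}\,e^{-\frac{s}{3}z^3},
\]
so the first step is to expand each factor as a power series:
\[
e^{xz}=\sum_{a\geq0}\frac{x^a}{a!}z^a,
\qquad
e^{\frac{t}{2}z^2}=\sum_{b\geq0}\frac{(t/2)^b}{b!}z^{2b},
\qquad
e^{-\frac{s}{3}z^3}=\sum_{c\geq0}\frac{(-s/3)^c}{c!}z^{3c}.
\]

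Next I would form the Cauchy product of these three absolutely convergent series, which is legitimate for every complex \(z\) since each series is entire. The coefficient of \(z^n\) in the product is the finite sum over triples \((a,b,c)\) of nonnegative integers with \(a+2b+3c=n\):
\[
\sum_{\substack{a,b,c\geq0\\a+2b+3c=n}}
\frac{x^a}{a!}\frac{(t/2)^b}{b!}\frac{(-s/3)^c}{c!}.
\]
The sum is finite because \(a\leq n\), \(b\leq n/2\) and \(c\leq n/3\).

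Then I would compare this coefficient with the coefficient \(\mathcal H_n/n!\) on the left-hand side of the generating function. Uniqueness of power series coefficients in \(z\) gives \(\mathcal H_n\) as \(n!\) times the sum above, which is the first form of the statement. The equivalent second form follows by rewriting \((t/2)^b(-s/3)^c\) as \((-1)^c t^b s^c/(2^b3^c)\).

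There is no real obstacle here. The only point needing care is justifying the coefficient comparison, and that is immediate from absolute convergence, or can simply be done in the ring of formal power series in \(z\). As a check, I would verify the result against \(\mathcal H_3=x^3+3tx-2s\). The triples are \((3,0,0)\), \((1,1,0)\) and \((0,0,1)\), which give \(6\bigl(x^3/6+xt/2-s/3\bigr)=x^3+3tx-2s\), in agreement.
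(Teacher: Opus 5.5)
Your proposal is correct and follows essentially the same route as the paper: you factor the generating function into $e^{xz}e^{\frac{t}{2}z^2}e^{-\frac{s}{3}z^3}$, expand each factor, and collect the coefficient of $z^n$. Your added justification of the coefficient comparison and your check against $\mathcal H_3=x^3+3tx-2s$ are both sound.
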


\begin{proof}
Factor the generating function as
\[
\exp\left(
xz+\frac{t}{2}z^2-\frac{s}{3}z^3
\right)
=
e^{xz}
e^{\frac{t}{2}z^2}
e^{-\frac{s}{3}z^3}.
\]
Expanding each factor,
\[
e^{xz}
=
\sum_{a=0}^{\infty}\frac{x^a}{a!}z^a,
\]
\[
e^{\frac{t}{2}z^2}
=
\sum_{b=0}^{\infty}
\frac{\left(\frac{t}{2}\right)^b}{b!}z^{2b},
\]
and
\[
e^{-\frac{s}{3}z^3}
=
\sum_{c=0}^{\infty}
\frac{\left(-\frac{s}{3}\right)^c}{c!}z^{3c}.
\]
Collecting the coefficient of \(z^n\) gives the result.
\end{proof}

The preceding identities show that the Heat--Airy polynomials form an
Appell-type family adapted simultaneously to the second- and third-order
commuting flows. Their operational representation
\[
\mathcal H_n=X_{t,s}^{\,n}1
\]
also anticipates the normal-ordering problem for \(X_{t,s}^{\,n}\).
In the next section, we retain all powers of \(D\) in this expansion.
The Heat--Airy polynomials will then appear naturally as the
derivative-free column of a larger triangular array of transport
coefficients.

\section{Normal ordering and transport of differential operators}
\label{sec:NormalOrdering}

We now study the normal ordering of powers of the transported position
operator
\[
X_{t,s}=x+tD-sD^2.
\]
This problem is central to the transport of polynomial differential
operators, since conjugation by \(P_{t,s}\) leaves \(D\) unchanged while
replacing every occurrence of \(x\) by \(X_{t,s}\).

Because multiplication by \(x\) does not commute with differentiation,
the powers \(X_{t,s}^n\) are not obtained by an ordinary polynomial
expansion. We therefore write them in normal form, with all powers of
\(D\) placed to the right.

\begin{definition}
For every \(n\geq0\), the \emph{normal-ordering coefficients}
\(T_{n,k}(x,t,s)\) are defined by
\[
X_{t,s}^n
=
\sum_{k=0}^{2n}
T_{n,k}(x,t,s)D^k.
\]
We adopt the convention
\[
T_{n,k}\equiv0
\qquad
\text{for }k<0,
\]
and the initial condition
\[
T_{0,0}=1.
\]
\end{definition}

The upper limit \(2n\) reflects the fact that \(X_{t,s}\) contains
derivatives of order at most two. The following recursion computes the
entire triangular family.

\begin{theorem}[Recursive normal-order expansion]
\label{Thm:NormalOrdering}
For every \(n\geq0\) and \(k\geq0\),
\[
\begin{aligned}
T_{n+1,k}
={}&
xT_{n,k}
+t\frac{\partial T_{n,k}}{\partial x}
-s\frac{\partial^2T_{n,k}}{\partial x^2}
\\
&+
tT_{n,k-1}
-2s\frac{\partial T_{n,k-1}}{\partial x}
-sT_{n,k-2}.
\end{aligned}
\]
\end{theorem}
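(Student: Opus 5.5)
The plan is to write $X_{t,s}^{n+1}=X_{t,s}\,X_{t,s}^n$, substitute the normal form of $X_{t,s}^n$ from the definition, and then normal-order each piece of $X_{t,s}=x+tD-sD^2$ acting on the left of a single term $T_{n,k}D^k$. Since $x$, $t$, $s$ enter the coefficients only as multiplication, the problem reduces to the operator Leibniz rule. For a smooth coefficient $g=g(x,t,s)$, viewed as a multiplication operator, we have
\[
Dg=gD+g_x,\qquad D^2g=gD^2+2g_xD+g_{xx},
\]
where $g_x,g_{xx}$ act by multiplication. Both identities follow from $[D,g]=g_x$, applied once and then twice.

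The three contributions to $X_{t,s}\,T_{n,k}D^k$ are then:
\[
x\,T_{n,k}D^k = (xT_{n,k})D^k,
\]
\[
tD\,T_{n,k}D^k = t\,\partial_xT_{n,k}\,D^k + tT_{n,k}D^{k+1},
\]
\[
-sD^2\,T_{n,k}D^k = -s\,\partial_x^2T_{n,k}\,D^k - 2s\,\partial_xT_{n,k}\,D^{k+1} - sT_{n,k}D^{k+2}.
\]
Summing over $k$ from $0$ to $2n$ expresses $X_{t,s}^{n+1}$ as $\sum_j c_jD^j$ with $j$ running up to $2n+2$. To read off $c_j$, I would reindex the $D^{k+1}$ terms by $k\mapsto k-1$ and the $D^{k+2}$ terms by $k\mapsto k-2$. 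The convention $T_{n,k}\equiv0$ for $k<0$ (and for $k>2n$) lets all the sums run uniformly over the same index range. The coefficient of $D^k$ is then exactly the right-hand side of the recursion in Theorem~\ref{Thm:NormalOrdering}.

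To conclude that this coefficient equals $T_{n+1,k}$, I will invoke uniqueness of normal form. An operator $\sum_k a_k(x)D^k$ with polynomial coefficients is zero only if every $a_k=0$, which one sees by applying it successively to $1,x,x^2,\dots$ (or to $e^{zx}$). So the coefficients in the definition are well defined and can be matched term by term.

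There is no serious obstacle. The only care needed is the bookkeeping at the boundary indices, namely $k=0,1$ and $k=2n+1,2n+2$, where the vanishing conventions must be used consistently; I would also note that the case $n=0$ gives $T_{1,0}=x$, $T_{1,1}=t$ and $T_{1,2}=-s$, as expected.
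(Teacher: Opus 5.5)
Your proposal is correct and matches the paper's proof: left-multiply the normal form of \(X_{t,s}^n\) by \(x+tD-sD^2\), normal-order with \(DT=T'+TD\) and \(D^2T=T''+2T'D+TD^2\), and collect powers of \(D\). Your appeal to uniqueness of the normal form and the extra convention \(T_{n,k}=0\) for \(k>2n\) are points the paper leaves implicit, and you handle them correctly.
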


\begin{proof}
By definition,
\[
X_{t,s}^n
=
\sum_{k=0}^{2n}
T_{n,k}D^k.
\]
Since
\[
X_{t,s}=x+tD-sD^2,
\]
we have
\[
X_{t,s}^{n+1}
=
(x+tD-sD^2)
\sum_{k=0}^{2n}
T_{n,k}D^k.
\]

The contribution of multiplication by \(x\) is
\[
xT_{n,k}D^k.
\]

For the first derivative, the normal-ordering identity
\[
DT=T'+TD
\]
gives
\[
\begin{aligned}
D(T_{n,k}D^k)
&=
(T_{n,k}'+T_{n,k}D)D^k
\\
&=
T_{n,k}'D^k
+
T_{n,k}D^{k+1}.
\end{aligned}
\]
Hence
\[
tD(T_{n,k}D^k)
=
tT_{n,k}'D^k
+
tT_{n,k}D^{k+1}.
\]

Similarly,
\[
D^2T=T''+2T'D+TD^2,
\]
so that
\[
\begin{aligned}
D^2(T_{n,k}D^k)
&=
T_{n,k}''D^k
+
2T_{n,k}'D^{k+1}
+
T_{n,k}D^{k+2}.
\end{aligned}
\]
Therefore
\[
-sD^2(T_{n,k}D^k)
=
-sT_{n,k}''D^k
-2sT_{n,k}'D^{k+1}
-sT_{n,k}D^{k+2}.
\]

Collecting the coefficients of equal powers of \(D\) yields
\[
\begin{aligned}
T_{n+1,k}
={}&
xT_{n,k}
+tT_{n,k}'
-sT_{n,k}''
\\
&+
tT_{n,k-1}
-2sT_{n,k-1}'
-sT_{n,k-2},
\end{aligned}
\]
which proves the result.
\end{proof}

The first powers of the transported position operator illustrate the
structure of the recursion.

\begin{corollary}
The first two nontrivial powers of \(X_{t,s}\) are
\[
\begin{aligned}
X_{t,s}^2
={}&
(x^2+t)
+(2tx-2s)D
+(t^2-2sx)D^2
\\
&-2tsD^3
+s^2D^4,
\end{aligned}
\]
and
\[
\begin{aligned}
X_{t,s}^3
={}&
(x^3+3tx-2s)
\\
&+
(3tx^2+3t^2-6sx)D
\\
&+
(3t^2x-3sx^2-9st)D^2
\\
&+
(t^3-6stx+6s^2)D^3
\\
&+
(3s^2x-3st^2)D^4
\\
&+
3s^2tD^5
-s^3D^6.
\end{aligned}
\]
\end{corollary}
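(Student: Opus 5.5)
The plan is to obtain both expansions by applying the recursion of Theorem~\ref{Thm:NormalOrdering} twice, starting from the first power. Directly from the definition of \(X_{t,s}\) we read off
\[
T_{1,0}=x,\qquad T_{1,1}=t,\qquad T_{1,2}=-s,
\]
with all other \(T_{1,k}\) equal to zero. This agrees with the recursion applied to \(T_{0,0}=1\).

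For \(X_{t,s}^2\), I will evaluate the recursion with \(n=1\) for \(k=0,\dots,4\). Only \(T_{1,0}\) depends on \(x\), with \(T_{1,0}'=1\) and \(T_{1,0}''=0\), so the derivative terms are nearly trivial. For example, \(k=1\) gives
\[
xt+tx-2s=2tx-2s,
\]
and \(k=2\) gives
\[
-sx+t^2-sx=t^2-2sx.
\]
The entries \(k=3,4\) come only from the shifted terms \(tT_{1,k-1}\) and \(-sT_{1,k-2}\), giving \(-2ts\) and \(s^2\). This reproduces the stated formula for \(X_{t,s}^2\).

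For \(X_{t,s}^3\), I will feed these five coefficients back into the recursion with \(n=2\) for \(k=0,\dots,6\). The needed derivatives are:
\[
T_{2,0}'=2x,\quad T_{2,0}''=2,\qquad T_{2,1}'=2t,\qquad T_{2,2}'=-2s,
\]
and \(T_{2,3},T_{2,4}\) are constant. As a sanity check, the \(k=0\) entry should equal \(\mathcal H_3=x^3+3tx-2s\), which was found earlier from Theorem~\ref{thm:HeatAiryRecurrence}. Also, the top entries \(k=5,6\) come purely from \(tT_{2,4}-sT_{2,3}\) and \(-sT_{2,4}\), which gives \(3s^2t\) and \(-s^3\).

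The only step that needs care is bookkeeping in the middle coefficients \(k=2,3,4\), where up to six terms contribute. The \(D^2\) coefficient is typical:
\[
x(t^2-2sx)-2ts+t(2tx-2s)-4st-s(x^2+t)=3t^2x-3sx^2-9st.
\]
I would carry this out term by term in the same way for \(k=3,4\). A further independent check is to set \(s=0\) and compare with the binomial-type normal ordering of \((x+tD)^n\).
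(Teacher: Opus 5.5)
Your proposal is correct and follows essentially the same route as the paper: read off $T_{1,0}=x$, $T_{1,1}=t$, $T_{1,2}=-s$, then apply the recursion of Theorem~\ref{Thm:NormalOrdering} once to obtain $X_{t,s}^2$ and a second time to obtain $X_{t,s}^3$. The computations you display are right, including the $D^2$ coefficient and the top entries $3s^2t$ and $-s^3$, and the entries $k=1,3,4$ you leave for later also come out as stated when carried through.
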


\begin{proof}
The formula for \(X_{t,s}^2\) follows directly from
Theorem~\ref{Thm:NormalOrdering}. For \(n=1\),
\[
T_{1,0}=x,
\qquad
T_{1,1}=t,
\qquad
T_{1,2}=-s.
\]
Applying the recursion once gives
\[
\begin{aligned}
T_{2,0}&=x^2+t,\\
T_{2,1}&=2tx-2s,\\
T_{2,2}&=t^2-2sx,\\
T_{2,3}&=-2ts,\\
T_{2,4}&=s^2.
\end{aligned}
\]
A second application of the recursion yields the coefficients of
\(X_{t,s}^3\).
\end{proof}

The first rows of the normal-ordering array are therefore
{\small
\[
\boxed{
\begin{array}{c|ccccccc}
n
&T_{n,0}
&T_{n,1}
&T_{n,2}
&T_{n,3}
&T_{n,4}
&T_{n,5}
&T_{n,6}
\\
\hline
0
&1
\\[1mm]
1
&x
&t
&-s
\\[1mm]
2
&x^2+t
&2tx-2s
&t^2-2sx
&-2ts
&s^2
\\[1mm]
3
&x^3+3tx-2s
&3tx^2+3t^2-6sx
&3t^2x-3sx^2-9st
&t^3-6stx+6s^2
&3s^2x-3st^2
&3s^2t
&-s^3
\end{array}
}
\]
}
The derivative-free column of this array is precisely the Heat--Airy
polynomial sequence introduced in the preceding section.

\begin{theorem}
\label{Thm:FirstColumn}
For every \(n\geq0\),
\[
T_{n,0}(x,t,s)
=
\mathcal H_n(x,t,s).
\]
\end{theorem}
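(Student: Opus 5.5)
The plan is to apply the normal-ordered operator to the constant function \(1\). By definition,
\[
X_{t,s}^n 1
=
\sum_{k=0}^{2n} T_{n,k}(x,t,s)\,D^k 1 .
\]
Since \(D^k1=0\) for every \(k\geq1\) and \(D^01=1\), only the derivative-free coefficient survives, so
\[
X_{t,s}^n1 = T_{n,0}(x,t,s).
\]
The operational representation proved in Section~\ref{sec:HeatAiryPolynomials} gives \(\mathcal H_n(x,t,s)=X_{t,s}^{\,n}1\). Comparing the two expressions yields \(T_{n,0}=\mathcal H_n\) for every \(n\geq0\). For \(n=0\) this reads \(T_{0,0}=1=\mathcal H_0\), which is consistent with the stated initial condition.

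There is one point to address before this is airtight: the coefficients \(T_{n,k}\) must be well defined, i.e.\ the normal-ordered expansion must be unique. This holds because any polynomial differential operator has a unique representation \(\sum_k a_k(x)D^k\). Concretely, the \(a_k\) can be read off by applying the operator to \(x^m\) for \(m=0,1,2,\dots\) and proceeding inductively. Theorem~\ref{Thm:NormalOrdering} already constructs this expansion, so existence is also in hand.

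As an independent check, I would compare recursions. Setting \(k=0\) in Theorem~\ref{Thm:NormalOrdering} and using \(T_{n,-1}=T_{n,-2}=0\) gives
\[
T_{n+1,0}=xT_{n,0}+t\,\partial_xT_{n,0}-s\,\partial_x^2T_{n,0}.
\]
By the Appell property (Proposition~\ref{prop:HeatAiryAppell}), the right-hand side with \(T_{n,0}\) replaced by \(\mathcal H_n\) equals
\[
x\mathcal H_n+nt\mathcal H_{n-1}-sn(n-1)\mathcal H_{n-2}.
\]
This is exactly the recurrence of Theorem~\ref{thm:HeatAiryRecurrence}, and it holds for all \(n\) once terms with negative index are read as zero. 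Induction on \(n\) from \(T_{0,0}=\mathcal H_0=1\) then closes the argument.

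No step here is a real obstacle. The only thing that needs care is the boundary cases \(n<2\) in the recurrence of Theorem~\ref{thm:HeatAiryRecurrence}, which is stated only for \(n\geq2\). These can be verified directly from \(\mathcal H_1=x\) and \(\mathcal H_2=x^2+t\), or avoided altogether by using the first argument.
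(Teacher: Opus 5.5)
Your proposal is correct and follows the paper's proof: you apply the normal-ordered expansion to the constant function \(1\), so that only \(T_{n,0}\) survives, and then invoke the operational representation \(\mathcal H_n=X_{t,s}^{\,n}1\). Your uniqueness remark and the recursion cross-check via the Appell property are both valid but not needed, since evaluating on \(1\) already shows that the derivative-free coefficient of any normal-ordered representation equals \(X_{t,s}^{\,n}1\).
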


\begin{proof}
From the normal-ordering expansion,
\[
X_{t,s}^n
=
\sum_{k=0}^{2n}
T_{n,k}(x,t,s)D^k.
\]
Applying both sides to the constant function \(1\), all terms with
\(k\geq1\) vanish, and therefore
\[
X_{t,s}^n1
=
T_{n,0}(x,t,s).
\]
By the operational representation of the Heat--Airy polynomials,
\[
X_{t,s}^n1
=
\mathcal H_n(x,t,s).
\]
Hence
\[
T_{n,0}(x,t,s)
=
\mathcal H_n(x,t,s).
\]
\end{proof}

\begin{remark}
The preceding proof shows that the identification
\[
T_{n,0}=\mathcal H_n
\]
is not merely a consequence of a parallel recursion. It follows directly
from the normal-ordering expansion itself: the Heat--Airy polynomial is
the part of \(X_{t,s}^n\) that survives when the operator acts on the
constant function \(1\).
\end{remark}

The remaining columns
\[
T_{n,1},\ldots,T_{n,2n}
\]
encode the noncommutative corrections produced by normal ordering.
Thus the triangular array
\[
\left\{
T_{n,k}(x,t,s):
0\leq k\leq2n
\right\}
\]
contains both the Heat--Airy polynomial family and the additional
coefficients required to transport polynomial differential operators.

Indeed, if
\[
L
=
\sum_{j=0}^{m}p_j(x)D^j
\]
is a polynomial differential operator, then
\[
P_{t,s}LP_{t,s}^{-1}
=
\sum_{j=0}^{m}p_j(X_{t,s})D^j.
\]
The normal-ordering coefficients provide a systematic way of rewriting
each polynomial \(p_j(X_{t,s})\) in standard differential-operator form.

This construction will be applied in the next section to moving boundary
problems, where the normal-ordered coefficients are evaluated along an
absorbing boundary \(x=f(t,s)\).

\section{Moving boundary operators}
\label{sec:MovingBoundary}

We now combine the Heat--Airy evolution with a moving absorbing boundary.
The main point is that the boundary condition generates identities among
the spatial derivatives of the solution. These identities can then be
combined with transported differential equations to produce compatibility
conditions for the boundary itself.

Let
\[
f:I\subset\mathbb R^2\longrightarrow\mathbb R
\]
be sufficiently smooth.

\begin{definition}
The \emph{boundary restriction operator} associated with \(f\) is defined by
\[
\gamma_f(u)
=
u(t,s,f(t,s)).
\]
\end{definition}

We consider functions \(v=v(t,s,x)\) satisfying the commuting
Heat--Airy equations
\[
v_t=\frac12v_{xx},
\qquad
v_s=-\frac13v_{xxx}.
\]
For convenience, we write
\[
v^{(k)}
=
\frac{\partial^k v}{\partial x^k}.
\]

Suppose that \(f=f(t,s)\) is an absorbing boundary, so that
\[
v(t,s,f(t,s))=0.
\]

Differentiation of this identity along the two evolution parameters gives
the first boundary-jet relations.

\begin{proposition}
\label{prop:BasicBoundaryRelations}
Along the absorbing boundary \(x=f(t,s)\),
\[
v^{(0)}=0,
\qquad
v^{(2)}+2f_tv^{(1)}=0,
\qquad
v^{(3)}-3f_sv^{(1)}=0.
\]
Equivalently,
\[
v^{(2)}=-2f_tv^{(1)},
\qquad
v^{(3)}=3f_sv^{(1)}.
\]
\end{proposition}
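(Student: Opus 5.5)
The plan is to differentiate the identity \(v(t,s,f(t,s))=0\) along each of the two evolution parameters. We then replace the time derivatives by spatial derivatives using the commuting Heat--Airy equations. The relation \(v^{(0)}=0\) on the boundary is simply the absorbing condition \(\gamma_f(v)=0\) rewritten in jet notation, so nothing needs to be proved there.

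For the \(t\)-derivative, I will apply the chain rule to \(t\mapsto v(t,s,f(t,s))\), assuming \(v\) is \(C^1\) jointly in \((t,s,x)\) near the boundary and \(f\) is differentiable. This gives
\[
\gamma_f(v_t)+f_t\,\gamma_f(v^{(1)})=0.
\]
Substituting \(v_t=\frac12 v^{(2)}\) and multiplying by \(2\) yields \(v^{(2)}+2f_tv^{(1)}=0\) on \(x=f(t,s)\).

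For the \(s\)-derivative, the same argument gives
\[
\gamma_f(v_s)+f_s\,\gamma_f(v^{(1)})=0.
\]
With \(v_s=-\frac13 v^{(3)}\), this becomes \(-\frac13v^{(3)}+f_sv^{(1)}=0\). Multiplying by \(-3\) gives \(v^{(3)}-3f_sv^{(1)}=0\). The stated equivalent forms follow by rearrangement.

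There is no real obstacle here. The only points needing care are stating the regularity assumptions, namely enough smoothness of \(v\) up to the boundary for the chain rule and for the Heat--Airy equations to hold there, and keeping the sign conventions of the \(s\)-flow correct, since \(-\frac13 D^3\) produces the factor \(-3\) rather than \(+3\).
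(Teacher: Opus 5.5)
Your proposal is correct and matches the paper's proof: you differentiate \(v(t,s,f(t,s))=0\) in \(t\) and in \(s\), then replace \(v_t\) and \(v_s\) using \(v_t=\frac12v_{xx}\) and \(v_s=-\frac13v_{xxx}\), with the same sign handling. Your explicit remark about the regularity needed for the chain rule is a reasonable addition that the paper leaves implicit.
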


\begin{proof}
The boundary condition gives immediately
\[
v^{(0)}=0.
\]

Differentiating
\[
v(t,s,f(t,s))=0
\]
with respect to \(t\) yields
\[
v_t+f_tv_x=0.
\]
Using
\[
v_t=\frac12v_{xx},
\]
we obtain
\[
\frac12v^{(2)}+f_tv^{(1)}=0,
\]
and hence
\[
v^{(2)}+2f_tv^{(1)}=0.
\]

Similarly, differentiation with respect to \(s\) gives
\[
v_s+f_sv_x=0.
\]
Since
\[
v_s=-\frac13v_{xxx},
\]
we obtain
\[
-\frac13v^{(3)}+f_sv^{(1)}=0,
\]
or
\[
v^{(3)}-3f_sv^{(1)}=0.
\]
\end{proof}

The boundary relations can be differentiated further. In particular,
differentiating the second relation of
Proposition~\ref{prop:BasicBoundaryRelations} with respect to \(t\)
produces an identity involving \(v^{(4)}\).

\begin{proposition}
\label{prop:FourthBoundaryDerivative}
Along the boundary \(x=f(t,s)\),
\[
v^{(4)}
+
4f_tv^{(3)}
+
4f_t^2v^{(2)}
+
4f_{tt}v^{(1)}
=
0.
\]
Consequently,
\[
v^{(4)}
=
\left[
-4f_{tt}
+8(f_t)^3
-12f_tf_s
\right]v^{(1)}.
\]
\end{proposition}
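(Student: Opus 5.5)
The plan is to differentiate the second relation of Proposition~\ref{prop:BasicBoundaryRelations} totally in \(t\) along the boundary and then to eliminate the lower jets using the same proposition. Write
\[
g(t,s)=v^{(2)}(t,s,f(t,s))+2f_t(t,s)\,v^{(1)}(t,s,f(t,s)),
\]
which vanishes identically by Proposition~\ref{prop:BasicBoundaryRelations}, so \(\partial_t g=0\). The one fact needed repeatedly is the chain rule along the boundary, combined with the heat equation differentiated in \(x\) (legitimate for smooth \(v\)):
\[
\frac{d}{dt}\,v^{(k)}(t,s,f)=v^{(k)}_t+f_t\,v^{(k+1)}=\tfrac12 v^{(k+2)}+f_t\,v^{(k+1)}.
\]

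\textbf{Step 1 (the first identity).} Apply this rule to each term of \(g\).

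For \(k=2\) it gives \(\tfrac12 v^{(4)}+f_tv^{(3)}\).

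For the product \(2f_tv^{(1)}\), the product rule together with the case \(k=1\) gives
\[
2f_{tt}v^{(1)}+2f_t\left(\tfrac12 v^{(3)}+f_tv^{(2)}\right).
\]

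Summing the two contributions and setting the result to zero yields
\[
\tfrac12 v^{(4)}+2f_tv^{(3)}+2f_t^2v^{(2)}+2f_{tt}v^{(1)}=0.
\]
Multiplying by \(2\) gives the first displayed identity of the proposition.

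\textbf{Step 2 (the consequence).} Substitute \(v^{(3)}=3f_sv^{(1)}\) and \(v^{(2)}=-2f_tv^{(1)}\) from Proposition~\ref{prop:BasicBoundaryRelations} into the identity of Step 1. The coefficient of \(v^{(1)}\) in \(-v^{(4)}\) then collects as
\[
4f_{tt}+12f_tf_s-8f_t^3,
\]
which is exactly the claimed formula \(v^{(4)}=\left[-4f_{tt}+8(f_t)^3-12f_tf_s\right]v^{(1)}\).

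There is no real obstacle; the only point requiring care is bookkeeping. The total \(t\)-derivative of a boundary jet produces two terms: the heat-flow term \(\tfrac12 v^{(k+2)}\) and the transport term \(f_tv^{(k+1)}\). It is easy to drop the transport contribution hidden inside \(\frac{d}{dt}v^{(1)}\), which is what produces the \(4f_t^2v^{(2)}\) term and, after substitution, the cubic term \(8f_t^3\). The Airy flow enters only through the substitution for \(v^{(3)}\), which accounts for the mixed term \(-12f_tf_s\).
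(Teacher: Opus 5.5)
Your proof is correct and follows the paper's argument exactly. You differentiate \(v^{(2)}+2f_tv^{(1)}=0\) totally in \(t\) along the boundary using \(v^{(k)}_t=\tfrac12 v^{(k+2)}\), obtain \(\tfrac12v^{(4)}+2f_tv^{(3)}+2f_t^2v^{(2)}+2f_{tt}v^{(1)}=0\), double it, and then substitute \(v^{(2)}=-2f_tv^{(1)}\) and \(v^{(3)}=3f_sv^{(1)}\).
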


\begin{proof}
Differentiate
\[
v^{(2)}+2f_tv^{(1)}=0
\]
with respect to \(t\) along the moving boundary. This gives
\[
v_t^{(2)}
+
f_tv^{(3)}
+
2f_{tt}v^{(1)}
+
2f_t
\left(
v_t^{(1)}
+
f_tv^{(2)}
\right)
=
0.
\]
Using
\[
v_t^{(2)}
=
\frac12v^{(4)},
\qquad
v_t^{(1)}
=
\frac12v^{(3)},
\]
we obtain
\[
\frac12v^{(4)}
+
2f_tv^{(3)}
+
2f_t^2v^{(2)}
+
2f_{tt}v^{(1)}
=
0.
\]
Multiplication by \(2\) gives the first identity.

Substituting
\[
v^{(2)}=-2f_tv^{(1)},
\qquad
v^{(3)}=3f_sv^{(1)}
\]
then yields
\[
v^{(4)}
=
\left[
-4f_{tt}
+8(f_t)^3
-12f_tf_s
\right]v^{(1)}.
\]
\end{proof}

A crossed differentiation gives the next relation in the boundary hierarchy.

\begin{proposition}
\label{prop:FifthBoundaryDerivative}
Along the boundary \(x=f(t,s)\),
\[
v^{(5)}
-
3f_sv^{(3)}
+
2f_tv^{(4)}
-
6f_tf_sv^{(2)}
-
6f_{ts}v^{(1)}
=
0.
\]
Equivalently,
\[
\begin{aligned}
v^{(5)}
={}&
6f_{ts}v^{(1)}
+
3f_sv^{(3)}
-
2f_tv^{(4)}
+
6f_tf_sv^{(2)}.
\end{aligned}
\]
Consequently,
\[
v^{(5)}
=
\left[
6f_{ts}
+8f_tf_{tt}
-16(f_t)^4
+12(f_t)^2f_s
+9(f_s)^2
\right]v^{(1)}.
\]
\end{proposition}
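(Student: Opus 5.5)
The plan is to mirror the proof of Proposition~\ref{prop:FourthBoundaryDerivative}, but to differentiate the second-order boundary relation in the \emph{other} commuting direction. Specifically, we start from the identity
\[
v^{(2)}(t,s,f(t,s))+2f_t(t,s)\,v^{(1)}(t,s,f(t,s))=0
\]
of Proposition~\ref{prop:BasicBoundaryRelations}, which holds identically on the domain of \(f\). We then differentiate it totally with respect to \(s\) along the boundary. The chain rule gives, for each jet,
\[
\frac{d}{ds}\,v^{(k)}(t,s,f(t,s))=v^{(k)}_s+f_s\,v^{(k+1)}.
\]
Since \(\partial_s\) commutes with \(\partial_x\) for smooth solutions, the Airy equation \(v_s=-\frac13v_{xxx}\) gives \(v^{(k)}_s=-\frac13v^{(k+3)}\).

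Carrying this out, the derivative of \(v^{(2)}\) contributes \(-\frac13v^{(5)}+f_sv^{(3)}\). The derivative of \(2f_tv^{(1)}\) contributes
\[
2f_{ts}v^{(1)}+2f_t\left(-\tfrac13v^{(4)}+f_sv^{(2)}\right).
\]
Adding these two contributions and multiplying by \(-3\) should produce exactly the first displayed identity:
\[
v^{(5)}-3f_sv^{(3)}+2f_tv^{(4)}-6f_tf_sv^{(2)}-6f_{ts}v^{(1)}=0.
\]
The "equivalently" form is just a rearrangement of this identity, solved for \(v^{(5)}\).

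For the final formula, we eliminate the lower jets in favour of \(v^{(1)}\), using the relations already established:
\begin{itemize}
\item \(v^{(2)}=-2f_tv^{(1)}\) and \(v^{(3)}=3f_sv^{(1)}\) from Proposition~\ref{prop:BasicBoundaryRelations};
\item \(v^{(4)}=[-4f_{tt}+8f_t^3-12f_tf_s]v^{(1)}\) from Proposition~\ref{prop:FourthBoundaryDerivative}.
\end{itemize}
The terms then read:
\begin{itemize}
\item \(3f_sv^{(3)}\) gives \(9f_s^2v^{(1)}\);
\item \(-2f_tv^{(4)}\) gives \(\left(8f_tf_{tt}-16f_t^4+24f_t^2f_s\right)v^{(1)}\);
\item \(6f_tf_sv^{(2)}\) gives \(-12f_t^2f_sv^{(1)}\).
\end{itemize}
Collecting terms, the two \(f_t^2f_s\) contributions combine to \(12f_t^2f_s\), yielding the stated coefficient.

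There is no conceptual obstacle here. The only points requiring care are the following.
\begin{itemize}
\item The total \(s\)-derivative must also be applied to the factor \(v^{(1)}\) evaluated on the moving boundary, not only to \(f_t\).
\item Mixed partials must commute, i.e.\ \(\partial_s v^{(k)}=(\partial_s v)^{(k)}\), which requires \(v\) to be sufficiently smooth near the boundary. This is the same implicit assumption used in Proposition~\ref{prop:FourthBoundaryDerivative}.
\item The sign bookkeeping in the final substitution is where errors are most likely, so it should be checked term by term.
\end{itemize}
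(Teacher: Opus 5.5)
Your proposal is correct and follows essentially the same route as the paper. You take the total $s$-derivative of $v^{(2)}+2f_tv^{(1)}=0$ along the boundary, apply the Airy substitution $v^{(k)}_s=-\tfrac13v^{(k+3)}$, multiply by $-3$, and then eliminate $v^{(2)},v^{(3)},v^{(4)}$ using the earlier boundary relations, and your term-by-term bookkeeping reproduces the stated coefficient.
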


\begin{proof}
Differentiate
\[
v^{(2)}+2f_tv^{(1)}=0
\]
with respect to \(s\) along the moving boundary. We obtain
\[
v_s^{(2)}
+
f_sv^{(3)}
+
2f_{ts}v^{(1)}
+
2f_t
\left(
v_s^{(1)}
+
f_sv^{(2)}
\right)
=
0.
\]
Since
\[
v_s=-\frac13v^{(3)},
\]
we have
\[
v_s^{(2)}=-\frac13v^{(5)},
\qquad
v_s^{(1)}=-\frac13v^{(4)}.
\]
Hence
\[
-\frac13v^{(5)}
+
f_sv^{(3)}
+
2f_{ts}v^{(1)}
-
\frac23f_tv^{(4)}
+
2f_tf_sv^{(2)}
=
0.
\]
Multiplying by \(3\) and rearranging gives
\[
v^{(5)}
-
3f_sv^{(3)}
+
2f_tv^{(4)}
-
6f_tf_sv^{(2)}
-
6f_{ts}v^{(1)}
=
0.
\]

Finally, substituting the expressions for
\[
v^{(2)},\qquad v^{(3)},\qquad v^{(4)}
\]
from Propositions~\ref{prop:BasicBoundaryRelations} and
\ref{prop:FourthBoundaryDerivative} yields
\[
v^{(5)}
=
\left[
6f_{ts}
+8f_tf_{tt}
-16(f_t)^4
+12(f_t)^2f_s
+9(f_s)^2
\right]v^{(1)}.
\]
\end{proof}

The preceding results show that the absorbing boundary generates a
hierarchy of linear relations among the spatial boundary jets of \(v\).
We now combine these relations with the transport of a polynomial
differential operator.

\subsection{Second-order operators}

Consider
\[
L_2
=
c_{2,0}D^2
+
(c_{1,0}+c_{1,1}x)D
+
(c_{0,0}+c_{0,1}x+c_{0,2}x^2).
\]

Its Heat--Airy transport is
\[
N_2
=
P_{t,s}L_2P_{t,s}^{-1}.
\]
Since \(D\) is invariant under conjugation and
\[
P_{t,s}xP_{t,s}^{-1}=X_{t,s},
\]
we obtain
\[
N_2
=
c_{2,0}D^2
+
(c_{1,0}+c_{1,1}X_{t,s})D
+
c_{0,0}
+c_{0,1}X_{t,s}
+c_{0,2}X_{t,s}^2.
\]

Using
\[
X_{t,s}=x+tD-sD^2
\]
and
\[
\begin{aligned}
X_{t,s}^2
={}&
(x^2+t)
+(2tx-2s)D
+(t^2-2sx)D^2
\\
&-2tsD^3+s^2D^4,
\end{aligned}
\]
we write
\[
N_2
=
\sum_{k=0}^{4}q_k(x,t,s)D^k,
\]
where
\[
q_0
=
c_{0,0}
+c_{0,1}x
+c_{0,2}(x^2+t),
\]
\[
q_1
=
c_{1,0}
+c_{1,1}x
+c_{0,1}t
+c_{0,2}(2tx-2s),
\]
\[
q_2
=
c_{2,0}
+c_{1,1}t
-c_{0,1}s
+c_{0,2}(t^2-2sx),
\]
\[
q_3
=
-c_{1,1}s
-2c_{0,2}ts,
\]
and
\[
q_4
=
c_{0,2}s^2.
\]

Suppose now that \(v\) satisfies
\[
N_2v=0
\]
together with the Heat--Airy equations and the absorbing boundary
condition
\[
v(t,s,f(t,s))=0.
\]

Restriction of \(N_2v=0\) to \(x=f(t,s)\) gives
\[
\begin{aligned}
0={}&
q_0(f,t,s)v^{(0)}
+
q_1(f,t,s)v^{(1)}
+
q_2(f,t,s)v^{(2)}
\\
&+
q_3(f,t,s)v^{(3)}
+
q_4(f,t,s)v^{(4)}.
\end{aligned}
\]
Since
\[
v^{(0)}=0,
\]
the zeroth-order contribution vanishes, and therefore
\[
q_4v^{(4)}
+
q_3v^{(3)}
+
q_2v^{(2)}
+
q_1v^{(1)}
=
0,
\]
where from now on the coefficients \(q_j\) are understood to be evaluated
at \(x=f(t,s)\).

We deliberately retain the boundary equations in their unsolved form:
\[
v^{(2)}+2f_tv^{(1)}=0,
\]
\[
v^{(3)}-3f_sv^{(1)}=0,
\]
and
\[
v^{(4)}
+
4f_tv^{(3)}
+
4f_t^2v^{(2)}
+
4f_{tt}v^{(1)}
=
0.
\]

Thus the transported equation and the three boundary identities form a
homogeneous system for the ordered boundary jet
\[
V_2
=
\begin{pmatrix}
v^{(4)}\\
v^{(3)}\\
v^{(2)}\\
v^{(1)}
\end{pmatrix}.
\]

More precisely,
\[
M_2(f;t,s)V_2=0,
\]
where
\[
M_2(f;t,s)
=
\begin{pmatrix}
q_4 & q_3 & q_2 & q_1
\\[1mm]
0 & 0 & 1 & 2f_t
\\[1mm]
0 & 1 & 0 & -3f_s
\\[1mm]
1 & 4f_t & 4f_t^2 & 4f_{tt}
\end{pmatrix}_{x=f(t,s)}.
\]

A nontrivial boundary jet therefore requires the compatibility condition
\[
\det M_2(f;t,s)=0.
\]

Expanding the determinant gives
\[
\begin{aligned}
0={}&
q_1(f,t,s)
-2f_tq_2(f,t,s)
+3f_sq_3(f,t,s)
\\
&+
\left[
-4f_{tt}
+8(f_t)^3
-12f_tf_s
\right]
q_4(f,t,s).
\end{aligned}
\]

Thus the characteristic determinant associated with \(N_2\) produces a
nonlinear differential equation for the absorbing boundary \(f(t,s)\).

\begin{remark}
The characteristic matrix separates the two ingredients of the
construction. Its first row comes from the transported differential
equation, whereas the remaining rows depend only on the absorbing
boundary and the Heat--Airy evolution. This distinction will become useful
when higher-order operators are considered.
\end{remark}

\subsection{Recovery of the Heat boundary equation}

The classical Heat transport is recovered on the distinguished slice
\[
s=0.
\]

\begin{corollary}
\label{Cor:HeatRecovery}
On \(s=0\),
\[
q_3(f,t,0)=0,
\qquad
q_4(f,t,0)=0,
\]
and hence the Heat--Airy compatibility equation reduces to
\[
q_1(f,t,0)
-
2f_tq_2(f,t,0)
=
0.
\]

Since
\[
q_1(x,t,0)
=
c_{1,0}
+c_{1,1}x
+c_{0,1}t
+2c_{0,2}tx
\]
and
\[
q_2(x,t,0)
=
c_{2,0}
+c_{1,1}t
+c_{0,2}t^2,
\]
the boundary \(F(t)=f(t,0)\) satisfies
\[
\begin{aligned}
0={}&
c_{1,0}
+c_{1,1}F
+c_{0,1}t
+2c_{0,2}tF
\\
&-
2
\left(
c_{2,0}
+c_{1,1}t
+c_{0,2}t^2
\right)F'(t).
\end{aligned}
\]
\end{corollary}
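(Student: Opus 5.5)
The plan is to substitute \(s=0\) directly into the explicit coefficient formulas for \(N_2\) and into the expanded form of \(\det M_2(f;t,s)=0\).

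First, from the formulas preceding the statement, \(q_3=-c_{1,1}s-2c_{0,2}ts\) and \(q_4=c_{0,2}s^2\). Each term carries a factor of \(s\), so both vanish identically on \(s=0\) for every \(x\). In particular they vanish at \(x=f(t,0)\).

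Second, I would take the expanded determinant identity
\[
q_1-2f_tq_2+3f_sq_3+\left[-4f_{tt}+8(f_t)^3-12f_tf_s\right]q_4=0
\]
and restrict it to \(s=0\). One point needs care here. The coefficients \(f_s\) and \(f_{tt}\) multiply \(q_3\) and \(q_4\), so I would assume \(f\) is smooth enough, say \(C^2\) up to the slice, that \(f_s(t,0)\) and \(f_{tt}(t,0)\) are finite. Then those products vanish at \(s=0\), leaving \(q_1(f,t,0)-2f_tq_2(f,t,0)=0\).

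Third, I would identify the surviving terms. Since \(F(t)=f(t,0)\), we have \(f_t(t,0)=F'(t)\): differentiation in \(t\) commutes with restriction to a fixed value of \(s\). Setting \(s=0\) in the formulas for \(q_1\) and \(q_2\) gives the stated expressions \(c_{1,0}+c_{1,1}x+c_{0,1}t+2c_{0,2}tx\) and \(c_{2,0}+c_{1,1}t+c_{0,2}t^2\). Evaluating at \(x=F\) and substituting yields the displayed equation for \(F\).

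No step is a real obstacle. The only subtlety is the regularity needed so that \(f_s\), \(f_{tt}\) stay bounded on \(s=0\) and the products \(f_sq_3\), \(f_{tt}q_4\) actually vanish there. As a consistency check, I would note that the resulting equation agrees with the classical heat transport \(X_{t,0}=x+tD\): the \(2\times2\) system formed by restricting \(N_t^Hu=0\) and the relation \(u^{(2)}+2F'u^{(1)}=0\) produces the same equation.
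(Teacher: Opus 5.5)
Your proposal is correct and follows the paper's own inline argument. You set \(s=0\) in the explicit coefficients, use the factor \(s\) in \(q_3\) and \(q_4\) to kill those terms in the expanded determinant, and identify \(f_t(t,0)=F'(t)\); your explicit requirement that \(f_s\) and \(f_{tt}\) stay finite on the slice is something the paper leaves implicit in its standing assumption that \(f\) is sufficiently smooth.
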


Thus the classical Heat boundary equation appears as the restriction of
the Heat--Airy compatibility condition to the slice \(s=0\).

As a simple illustration, suppose
\[
c_{1,0}=c_{1,1}=c_{0,1}=0.
\]
Then
\[
2c_{0,2}tF
-
2(c_{2,0}+c_{0,2}t^2)F'
=
0.
\]
Writing
\[
a=c_{0,2},
\qquad
b=c_{2,0},
\]
we obtain
\[
atF-(b+at^2)F'=0,
\]
whose nonzero solutions are
\[
F(t)
=
C\sqrt{b+at^2}.
\]

The general compatibility equation derived above will serve as the
starting point for the explicit examples considered in the next section.
There we shall see that the first compatibility equation need not determine
the full two-parameter boundary and that additional equations in the
spatial jet may supply the remaining information.

\section{Explicit absorbing boundaries}
\label{sec:ExplicitBoundaries}

We now examine several specializations of the second-order theory for
which the boundary compatibility equations can be analyzed explicitly.
These examples illustrate an important feature of the Heat--Airy
construction: the first boundary equation need not determine the full
two-parameter boundary. The remaining freedom may instead be fixed by
higher equations in the spatial jet of the transported differential
equation.

\subsection{A linear-potential family}

Consider
\[
L_2
=
D^2
+
c_{1,0}D
+
c_{0,0}
+
c_{0,1}x,
\qquad
c_{0,1}\neq0.
\]
This corresponds to
\[
c_{2,0}=1,
\qquad
c_{1,1}=0,
\qquad
c_{0,2}=0
\]
in the general second-order family.

Since
\[
P_{t,s}xP_{t,s}^{-1}
=
x+tD-sD^2,
\]
the transported operator is
\[
\begin{aligned}
N_2
&=
P_{t,s}L_2P_{t,s}^{-1}
\\
&=
D^2+c_{1,0}D+c_{0,0}
+c_{0,1}(x+tD-sD^2),
\end{aligned}
\]
and therefore
\[
N_2
=
(c_{0,0}+c_{0,1}x)
+
(c_{1,0}+c_{0,1}t)D
+
(1-c_{0,1}s)D^2.
\]

Let \(v\) satisfy
\[
N_2v=0
\]
together with the Heat--Airy equations, and suppose that
\[
v(t,s,f(t,s))=0.
\]

\subsubsection{The first compatibility equation}

Restricting \(N_2v=0\) to the absorbing boundary gives
\[
(c_{0,0}+c_{0,1}f)v^{(0)}
+
(c_{1,0}+c_{0,1}t)v^{(1)}
+
(1-c_{0,1}s)v^{(2)}
=
0.
\]
Since
\[
v^{(0)}=0,
\]
we obtain
\[
(1-c_{0,1}s)v^{(2)}
+
(c_{1,0}+c_{0,1}t)v^{(1)}
=
0.
\]

Using
\[
v^{(2)}=-2f_tv^{(1)}
\]
and assuming \(v^{(1)}\neq0\), we find
\[
2(1-c_{0,1}s)f_t
=
c_{1,0}+c_{0,1}t.
\]
Hence
\[
f_t
=
\frac{c_{1,0}+c_{0,1}t}
{2(1-c_{0,1}s)}.
\]

Integration with respect to \(t\), for fixed \(s\), gives
\[
f(t,s)
=
\widetilde\gamma(s)
+
\frac{c_{1,0}t}{2(1-c_{0,1}s)}
+
\frac{c_{0,1}t^2}{4(1-c_{0,1}s)}.
\]
Absorbing into \(\widetilde\gamma\) a term depending only on \(s\), this
can be written more conveniently as
\[
f(t,s)
=
\gamma(s)
+
\frac{(c_{1,0}+c_{0,1}t)^2}
{4c_{0,1}(1-c_{0,1}s)}.
\]

Thus the first compatibility equation fixes the dependence on \(t\), but
leaves an arbitrary function \(\gamma(s)\). Notice also that \(c_{0,0}\)
has disappeared. This is a direct consequence of the absorbing boundary
condition: \(c_{0,0}\) occurs in the zeroth-order part of the transported
equation and therefore multiplies \(v^{(0)}\), which vanishes on the
boundary.

\subsubsection{The next spatial-jet equation}

To determine the remaining function \(\gamma\), we use the fact that
\[
N_2v=0
\]
holds as an identity in \(x\). Hence
\[
D(N_2v)=0.
\]
Differentiating gives
\[
\begin{aligned}
0={}&
c_{0,1}v
+
(c_{0,0}+c_{0,1}x)v^{(1)}
\\
&+
(c_{1,0}+c_{0,1}t)v^{(2)}
+
(1-c_{0,1}s)v^{(3)}.
\end{aligned}
\]
Restriction to \(x=f(t,s)\), together with \(v^{(0)}=0\), yields
\[
(1-c_{0,1}s)v^{(3)}
+
(c_{1,0}+c_{0,1}t)v^{(2)}
+
(c_{0,0}+c_{0,1}f)v^{(1)}
=
0.
\]

Using
\[
v^{(2)}=-2f_tv^{(1)},
\qquad
v^{(3)}=3f_sv^{(1)},
\]
we obtain
\[
3(1-c_{0,1}s)f_s
-
2(c_{1,0}+c_{0,1}t)f_t
+
c_{0,0}+c_{0,1}f
=
0.
\]

The first compatibility equation gives
\[
2(1-c_{0,1}s)f_t
=
c_{1,0}+c_{0,1}t,
\]
and hence
\[
3(1-c_{0,1}s)f_s
-
\frac{(c_{1,0}+c_{0,1}t)^2}
{1-c_{0,1}s}
+
c_{0,0}+c_{0,1}f
=
0.
\]

Substitute
\[
f(t,s)
=
\gamma(s)
+
\frac{(c_{1,0}+c_{0,1}t)^2}
{4c_{0,1}(1-c_{0,1}s)}.
\]
Then
\[
f_s
=
\gamma'(s)
+
\frac{(c_{1,0}+c_{0,1}t)^2}
{4(1-c_{0,1}s)^2}.
\]
The terms depending on \(t\) cancel identically, leaving
\[
3(1-c_{0,1}s)\gamma'(s)
+
c_{0,1}\gamma(s)
+
c_{0,0}
=
0.
\]

Set
\[
y(s)
=
\gamma(s)+\frac{c_{0,0}}{c_{0,1}}.
\]
Then
\[
3(1-c_{0,1}s)y'(s)+c_{0,1}y(s)=0,
\]
so that
\[
\frac{y'}{y}
=
-\frac{c_{0,1}}{3(1-c_{0,1}s)}.
\]
Integration gives
\[
y(s)
=
C(1-c_{0,1}s)^{1/3}.
\]
Therefore
\[
\gamma(s)
=
-\frac{c_{0,0}}{c_{0,1}}
+
C(1-c_{0,1}s)^{1/3}.
\]

We have proved the following.

\begin{proposition}
\label{prop:LinearPotentialBoundary}
For
\[
L_2
=
D^2+c_{1,0}D+c_{0,0}+c_{0,1}x,
\qquad c_{0,1}\neq0,
\]
the first two Heat--Airy compatibility equations admit the family
\[
f(t,s)
=
-\frac{c_{0,0}}{c_{0,1}}
+
C(1-c_{0,1}s)^{1/3}
+
\frac{(c_{1,0}+c_{0,1}t)^2}
{4c_{0,1}(1-c_{0,1}s)},
\]
on any domain on which \(1-c_{0,1}s\neq0\).
\end{proposition}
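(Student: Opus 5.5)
The family should be checked directly against the two compatibility equations derived above, namely
\[
E_1:\quad 2(1-c_{0,1}s)f_t=c_{1,0}+c_{0,1}t,
\]
\[
E_2:\quad 3(1-c_{0,1}s)f_s-2(c_{1,0}+c_{0,1}t)f_t+c_{0,0}+c_{0,1}f=0,
\]
which come from restricting \(N_2v=0\) and \(D(N_2v)=0\) to \(x=f(t,s)\). In each case we use \(v^{(0)}=0\), \(v^{(2)}=-2f_tv^{(1)}\) and \(v^{(3)}=3f_sv^{(1)}\) from Proposition~\ref{prop:BasicBoundaryRelations}, and divide by \(v^{(1)}\neq0\). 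Since the statement only asserts that the displayed family is admitted, direct substitution is enough. Along the way I will also note the derivation above, which shows that every solution of \(E_1,E_2\) has this form.

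\textbf{Step 1.} Write \(w=1-c_{0,1}s\), which is nonzero on the domain, and \(\ell=c_{1,0}+c_{0,1}t\). The family is
\[
f=-\frac{c_{0,0}}{c_{0,1}}+Cw^{1/3}+\frac{\ell^2}{4c_{0,1}w}.
\]
Differentiating in \(t\) gives \(f_t=\ell/(2w)\), so \(E_1\) holds at once.

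\textbf{Step 2.} Compute \(f_s\). Using \(\partial_s w=-c_{0,1}\),
\[
f_s=-\tfrac13 c_{0,1}Cw^{-2/3}+\frac{\ell^2}{4w^2}.
\]
Substituting this and \(f_t=\ell/(2w)\) into \(E_2\) gives
\[
-c_{0,1}Cw^{1/3}+\frac{3\ell^2}{4w}-\frac{\ell^2}{w}+c_{0,0}-c_{0,0}+c_{0,1}Cw^{1/3}+\frac{\ell^2}{4w}.
\]
The \(C\)-terms cancel, the \(c_{0,0}\)-terms cancel, and the \(\ell^2/w\) coefficients sum to \(3/4-1+1/4=0\). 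So \(E_2\) is satisfied identically in \((t,s)\).

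\textbf{Step 3 (optional uniqueness).} To show the family is exhaustive, integrate \(E_1\) in \(t\) to get \(f=\gamma(s)+\ell^2/(4c_{0,1}w)\). Inserting this into \(E_2\), the \(t\)-dependence drops out and leaves the linear ODE
\[
3w\gamma'+c_{0,1}\gamma+c_{0,0}=0.
\]
Its solutions are \(\gamma=-c_{0,0}/c_{0,1}+Cw^{1/3}\), obtained by separation of variables on intervals where \(w\neq0\); the real cube root is needed where \(w<0\).

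\textbf{Main obstacle.} The only delicate step is the cancellation of the \(t\)-dependent terms in \(E_2\). That cancellation is what makes the ansatz consistent, since otherwise \(E_1\) and \(E_2\) would be incompatible. I will check it by the coefficient bookkeeping in Step 2.
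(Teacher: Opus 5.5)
Your proposal is correct and follows the paper's argument: the paper integrates $E_1$ in $t$, substitutes the result into $E_2$, observes that the $t$-dependent terms cancel, and solves $3(1-c_{0,1}s)\gamma'+c_{0,1}\gamma+c_{0,0}=0$, which is exactly your Step~3. Your Steps~1--2 run that derivation in reverse as a direct substitution check, and your computations of $f_t$, $f_s$ and the cancellation $\tfrac34-1+\tfrac14=0$ are accurate; your remark that the real cube root is needed where $1-c_{0,1}s<0$ makes explicit a point the paper leaves implicit.
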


This example exhibits a two-stage determination of the boundary:
\[
\gamma_f(N_2v)=0
\]
determines the \(t\)-dependence up to an arbitrary function of \(s\),
whereas
\[
\gamma_f\!\left(D(N_2v)\right)=0
\]
determines the remaining function.

More generally, the identities
\[
N_2v=0,
\qquad
D(N_2v)=0,
\qquad
D^2(N_2v)=0,\ldots
\]
form a hierarchy of differential consequences of the transported
equation. Their boundary restrictions may successively remove degrees of
freedom left unresolved at preceding levels.

\subsection{A first-order characteristic equation}

We next consider the specialization
\[
c_{1,0}=c_{0,1}=c_{0,2}=0,
\]
so that
\[
L_2
=
c_{2,0}D^2
+
c_{1,1}xD
+
c_{0,0},
\qquad
c_{1,1}\neq0.
\]

The Heat--Airy transport is
\[
\begin{aligned}
N_2
&=
c_{2,0}D^2
+
c_{1,1}(x+tD-sD^2)D
+
c_{0,0}
\\
&=
c_{0,0}
+
c_{1,1}xD
+
(c_{2,0}+c_{1,1}t)D^2
-
c_{1,1}sD^3.
\end{aligned}
\]
Hence
\[
N_2
=
c_{0,0}
+
c_{1,1}xD
+
(c_{2,0}+c_{1,1}t)D^2
-
c_{1,1}sD^3.
\]

Restricting \(N_2v=0\) to \(x=f(t,s)\) gives
\[
-c_{1,1}s\,v^{(3)}
+
(c_{2,0}+c_{1,1}t)v^{(2)}
+
c_{1,1}f\,v^{(1)}
=
0.
\]
Using
\[
v^{(2)}=-2f_tv^{(1)},
\qquad
v^{(3)}=3f_sv^{(1)},
\]
we obtain
\[
-3c_{1,1}s f_s
-
2(c_{2,0}+c_{1,1}t)f_t
+
c_{1,1}f
=
0.
\]

Set
\[
a=c_{1,1},
\qquad
b=c_{2,0}.
\]
Then
\[
-3asf_s-2(b+at)f_t+af=0.
\]

The characteristic equations are
\[
\frac{dt}{d\tau}
=
-2(b+at),
\qquad
\frac{ds}{d\tau}
=
-3as,
\qquad
\frac{df}{d\tau}
=
-af.
\]

Writing
\[
y=b+at,
\]
we obtain
\[
\frac{dy}{d\tau}
=
-2ay.
\]
Thus
\[
y=C_1e^{-2a\tau},
\qquad
s=C_2e^{-3a\tau},
\qquad
f=C_3e^{-a\tau}.
\]
It follows that
\[
\frac{y^3}{s^2}
\]
is constant along the characteristics, while
\[
\frac{f}{\sqrt y}
\]
is constant along each characteristic. Therefore the general local
solution can be written as
\[
f(t,s)
=
\sqrt{b+at}\,
\Phi\left(
\frac{(b+at)^3}{s^2}
\right),
\]
where \(\Phi\) is arbitrary and the expression is understood on a region
where the required branches are defined.

Returning to the original coefficients,
\[
f(t,s)
=
\sqrt{c_{2,0}+c_{1,1}t}\,
\Phi\left(
\frac{(c_{2,0}+c_{1,1}t)^3}{s^2}
\right).
\]

\begin{proposition}
For
\[
L_2
=
c_{2,0}D^2+c_{1,1}xD+c_{0,0},
\qquad
c_{1,1}\neq0,
\]
the first Heat--Airy boundary compatibility equation admits the
characteristic family
\[
f(t,s)
=
\sqrt{c_{2,0}+c_{1,1}t}\,
\Phi\left(
\frac{(c_{2,0}+c_{1,1}t)^3}{s^2}
\right),
\]
where \(\Phi\) is an arbitrary function.
\end{proposition}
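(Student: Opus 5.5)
The plan is to verify directly that the proposed family satisfies the first compatibility equation obtained above,
\[
-3as\,f_s-2(b+at)f_t+af=0,\qquad a=c_{1,1},\quad b=c_{2,0},
\]
which is linear and homogeneous of first order. It suffices to show two things. First, \(\xi=(b+at)^3/s^2\) is a first integral of the vector field \(-2(b+at)\partial_t-3as\,\partial_s\). Second, \(\sqrt{b+at}\) is a particular solution of the full equation. The claim then follows because the equation is linear: a solution times any first integral of the transport part is again a solution.

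\textbf{Step 1.} Apply the transport operator \(\mathcal T=-3as\,\partial_s-2(b+at)\partial_t\) to \(\xi\). We get
\[
\partial_t\xi=\frac{3a(b+at)^2}{s^2},\qquad \partial_s\xi=-\frac{2(b+at)^3}{s^3},
\]
so \(\mathcal T\xi=6a(b+at)^3/s^2-6a(b+at)^3/s^2=0\). By the chain rule, \(\mathcal T\Phi(\xi)=\Phi'(\xi)\,\mathcal T\xi=0\) for any differentiable \(\Phi\).

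\textbf{Step 2.} Set \(g=\sqrt{b+at}\). Then \(g_s=0\) and \(g_t=a/(2g)\), so \(\mathcal T g=-2(b+at)\cdot a/(2\sqrt{b+at})=-a g\). Hence \(\mathcal T g+ag=0\).

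\textbf{Step 3.} For \(f=g\,\Phi(\xi)\), the Leibniz rule gives
\[
\mathcal T f+af=\Phi(\xi)\,(\mathcal T g+ag)+g\,\mathcal T\Phi(\xi)=0.
\]
Substituting \(a=c_{1,1}\) and \(b=c_{2,0}\) yields the stated family.

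The only delicate point is the domain. The formula requires \(s\neq0\) and a region where \(b+at\) has a fixed sign, so that the square-root branch is defined; this matches the caveat already recorded before the proposition. If one also wants the claim that every local solution has this form, that would follow from the standard method of characteristics. Away from \(s=0\) and \(b+at=0\), the map \((t,s)\mapsto(\xi,\tau)\) is a local diffeomorphism, and along each characteristic \(f/g\) is constant (since \(\mathcal T(f/g)=0\)). However, the proposition only asserts that the family is admitted, so the verification in Steps 1–3 is sufficient.
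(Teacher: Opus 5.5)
Your proof is correct and is essentially the paper's argument read infinitesimally. The paper integrates the characteristic system $dt/d\tau=-2(b+at)$, $ds/d\tau=-3as$, $df/d\tau=-af$ in closed form and observes that $(b+at)^3/s^2$ and $f/\sqrt{b+at}$ are constant along characteristics; you instead check directly that $\mathcal T\xi=0$ and $\mathcal T\sqrt{b+at}=-a\sqrt{b+at}$, and then use that $\mathcal T$ is a derivation. Your verification is exactly what ``admits'' requires. The paper's derivation is what backs its further remark that this family is the general local solution, which you correctly note follows from the standard characteristic argument away from $s=0$ and $b+at=0$.
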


On the Heat slice \(s=0\), it is preferable to return directly to the
differential equation rather than to the characteristic representation.
Setting \(s=0\) gives
\[
-2(c_{2,0}+c_{1,1}t)F'(t)
+
c_{1,1}F(t)
=
0,
\]
where
\[
F(t)=f(t,0).
\]
Hence
\[
F(t)
=
C_0\sqrt{c_{2,0}+c_{1,1}t}.
\]

The characteristic formula therefore describes a larger family of
two-parameter solutions, while the Heat boundary supplies the boundary
datum on the singular slice \(s=0\).

\subsection{Regular deformation of the Heat boundary}

The preceding characteristic representation is singular at \(s=0\).
To study solutions that extend regularly from the Heat slice, we return
to the compatibility equation
\[
-3asf_s-2(b+at)f_t+af=0
\]
and assume that \(f\) admits a regular expansion
\[
f(t,s)
=
\sum_{n=0}^{\infty}s^n f_n(t)
\]
near \(s=0\).

Substitution gives, for each \(n\geq0\),
\[
-2(b+at)f_n'(t)
+
a(1-3n)f_n(t)
=
0.
\]
Therefore
\[
f_n(t)
=
C_n(b+at)^{(1-3n)/2},
\]
and every solution regular at \(s=0\) has the formal expansion
\[
f(t,s)
=
\sum_{n=0}^{\infty}
C_n
s^n
(b+at)^{(1-3n)/2}.
\]

In particular,
\[
f(t,s)
=
C_0(b+at)^{1/2}
+
C_1s(b+at)^{-1}
+
C_2s^2(b+at)^{-5/2}
+\cdots.
\]
The coefficient \(C_0\) determines the Heat boundary
\[
f(t,0)
=
C_0\sqrt{b+at},
\]
but the constants \(C_1,C_2,\ldots\) are not determined by the first
compatibility equation.

To extract additional information, we consider the next spatial-jet
identity
\[
D(N_2v)=0.
\]
For the present specialization, its restriction to the absorbing boundary,
together with the boundary-jet relations of
Section~\ref{sec:MovingBoundary}, gives
\[
c+a
-2af f_t
+3(b+at)f_s
+4asf_{tt}
-8as(f_t)^3
+12asf_tf_s
=
0,
\]
where
\[
a=c_{1,1},
\qquad
b=c_{2,0},
\qquad
c=c_{0,0}.
\]

Substitute
\[
f(t,s)
=
C_0(b+at)^{1/2}
+
C_1s(b+at)^{-1}
+
O(s^2).
\]
Evaluation at \(s=0\) yields
\[
c+a-a^2C_0^2+3C_1=0,
\]
and hence
\[
C_1
=
\frac{a^2C_0^2-a-c}{3}.
\]

We therefore obtain the first-order Heat--Airy deformation
\[
f(t,s)
=
C_0\sqrt{b+at}
+
\frac{a^2C_0^2-a-c}
{3(b+at)}\,s
+
O(s^2).
\]
Equivalently,
\[
f_s(t,0)
=
\frac{a^2C_0^2-a-c}
{3(b+at)}.
\]

This example makes explicit the distinct roles of the different
compatibility requirements. The first boundary equation determines the
allowable functional form of the regular deformation. The Heat boundary
fixes its leading coefficient, while the next spatial-jet equation
determines the first variation in the Airy direction.

Schematically,
\[
\boxed{
\begin{aligned}
&\text{first Heat--Airy compatibility}
\\
&\qquad
+\ \text{regularity at }s=0
\\
&\qquad
+\ \text{higher spatial-jet compatibility}
\\[1mm]
&\hspace{2cm}
\Longrightarrow
\text{local Heat--Airy deformation of the Heat boundary}.
\end{aligned}
}
\]

This provides a natural transition to third-order operators. In that
setting the additional Airy flow becomes even more useful: successive
Heat--Airy compatibility relations will recover information that, in the
pure Heat problem, arises through a higher-dimensional characteristic
system.

\section{Third-order operators}
\label{sec:ThirdOrder}

We now turn to third-order polynomial differential operators. At this
level, the distinction between the Heat and Heat--Airy constructions
becomes particularly transparent. Under Heat--Airy transport, a general
operator in the class \(\mathcal L_3\) becomes an operator of order at
most six, and its boundary restriction therefore involves a finite
six-dimensional spatial jet. More importantly, in a suitable sparse
example the additional Airy flow provides a direct mechanism for
propagating the boundary away from the Heat slice.

Consider the general third-order operator
\[
\begin{aligned}
L_3
={}&
c_{3,0}D^3
+
(c_{2,0}+c_{2,1}x)D^2
\\
&
+
(c_{1,0}+c_{1,1}x+c_{1,2}x^2)D
\\
&
+
c_{0,0}
+c_{0,1}x
+c_{0,2}x^2
+c_{0,3}x^3.
\end{aligned}
\]

Since
\[
P_{t,s}DP_{t,s}^{-1}=D
\]
and
\[
P_{t,s}xP_{t,s}^{-1}=X_{t,s}=x+tD-sD^2,
\]
its Heat--Airy transport is
\[
\begin{aligned}
N_3
=
P_{t,s}L_3P_{t,s}^{-1}
={}&
c_{3,0}D^3
+
(c_{2,0}+c_{2,1}X_{t,s})D^2
\\
&
+
(c_{1,0}+c_{1,1}X_{t,s}
+c_{1,2}X_{t,s}^2)D
\\
&
+
c_{0,0}
+c_{0,1}X_{t,s}
+c_{0,2}X_{t,s}^2
+c_{0,3}X_{t,s}^3.
\end{aligned}
\]

By the normal-ordering expansion,
\[
X_{t,s}^j
=
\sum_{k=0}^{2j}
T_{j,k}(x,t,s)D^k.
\]
Consequently,
\[
N_3
=
\sum_{k=0}^{6}q_k(x,t,s)D^k.
\]

Thus a third-order polynomial differential operator becomes, under
Heat--Airy transport, an operator of order at most six. After restriction
to an absorbing boundary, the relevant nonzero spatial jet is naturally
ordered as
\[
\left(
v^{(6)},
v^{(5)},
v^{(4)},
v^{(3)},
v^{(2)},
v^{(1)}
\right).
\]
Together with the boundary identities developed in
Section~\ref{sec:MovingBoundary}, this leads naturally to a finite
compatibility system.

Although the general \(6\times6\) system can be constructed directly
from the normal-ordering coefficients, its full expansion is not needed
for the present purposes. Before considering a particular third-order
operator, we record a general relation between the Heat and Heat--Airy
spatial-jet hierarchies.

\subsection{Restriction of the Heat--Airy jet hierarchy}
\label{subsec:HierarchicalRecovery}

Let \(L\) be a polynomial differential operator and define
\[
N_{t,s}
=
P_{t,s}LP_{t,s}^{-1},
\qquad
P_{t,s}
=
\exp\left(
\frac{t}{2}D^2-\frac{s}{3}D^3
\right).
\]
On the distinguished slice \(s=0\), let
\[
P_t^H
=
\exp\left(\frac{t}{2}D^2\right),
\qquad
N_t^H
=
P_t^H L(P_t^H)^{-1}.
\]
Since
\[
P_{t,0}=P_t^H,
\]
we have
\[
N_{t,0}=N_t^H.
\]

Suppose that \(v=v(t,s,x)\) satisfies
\[
v_t=\frac12v_{xx},
\qquad
v_s=-\frac13v_{xxx},
\qquad
N_{t,s}v=0,
\]
and let \(f=f(t,s)\) be an absorbing boundary:
\[
v(t,s,f(t,s))=0.
\]
Define
\[
u(t,x)=v(t,0,x),
\qquad
F(t)=f(t,0).
\]
Then
\[
u_t=\frac12u_{xx},
\qquad
N_t^Hu=0,
\qquad
u(t,F(t))=0.
\]

The following proposition shows that this reduction persists at every
level of the spatial-jet hierarchy.

\begin{proposition}[Restriction principle for the spatial-jet hierarchy]
\label{prop:HierarchicalRecovery}
For every integer \(r\geq0\),
\[
\left.
\gamma_f\!\left(
D^r(N_{t,s}v)
\right)
\right|_{s=0}
=
\gamma_F\!\left(
D^r(N_t^Hu)
\right).
\]
Consequently, every finite collection of spatial-jet equations of the
pure Heat problem is obtained by restricting the corresponding
Heat--Airy spatial-jet equations to the slice \(s=0\).
\end{proposition}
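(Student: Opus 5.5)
The plan is to write both sides explicitly in normal-ordered form and compare them coefficient by coefficient. Write \(L=\sum_{j=0}^{m}p_j(x)D^j\). By the transport rule of Section~\ref{sec:NormalOrdering} and the normal-ordering expansion of Theorem~\ref{Thm:NormalOrdering},
\[
N_{t,s}=\sum_{j=0}^{m}p_j(X_{t,s})D^j=\sum_{k}q_k(x,t,s)D^k .
\]
Each \(q_k\) is a finite linear combination of the coefficients \(T_{n,k-j}(x,t,s)\). These are polynomials in \((x,t,s)\), since the recursion involves only multiplication by \(x,t,s\) and \(x\)-differentiation. The finite sum is a genuine identity of polynomial differential operators in \(x\), with \(t,s\) as parameters.

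\emph{Step 1.} Show that setting \(s=0\) in the coefficients yields the heat transport: \(q_k(x,t,0)=q_k^H(x,t)\), where \(N_t^H=\sum_k q_k^H D^k\). This holds because \(X_{t,0}=x+tD=P_t^HxP_t^{-1}\), so the normal ordering of \(X_{t,s}^n\) specialises at \(s=0\) to that of \((x+tD)^n\). Formally, run the recursion of Theorem~\ref{Thm:NormalOrdering} with \(s=0\) and compare it with the heat recursion. Alternatively, note that both sides are normal forms of the same operator, and the normal form is unique because the \(D^k\) are independent over polynomial coefficients. In particular \(q_k(\cdot,t,0)=0\) for \(k>m\) beyond the heat order, so the extra high-order terms drop out.

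\emph{Step 2.} Apply \(D^r\) by the Leibniz rule:
\[
D^r(N_{t,s}v)=\sum_k\sum_{i=0}^{r}\binom{r}{i}(\partial_x^iq_k)(x,t,s)\,v^{(k+r-i)}(t,s,x).
\]
The same formula holds for \(D^r(N^H_tu)\), with \(q_k^H\) in place of \(q_k\) and \(u\) in place of \(v\). Next restrict to \(x=f(t,s)\) and then set \(s=0\). Since all functions are smooth and evaluation commutes with setting \(s=0\), each term becomes \((\partial_x^iq_k)(F(t),t,0)\,v^{(k+r-i)}(t,0,F(t))\). By Step 1, \(\partial_x^iq_k(x,t,0)=\partial_x^iq_k^H(x,t)\). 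Since \(x\)-derivatives commute with the substitution \(s=0\), we also have \(v^{(j)}(t,0,x)=u^{(j)}(t,x)\). Hence the term equals the corresponding term of \(\gamma_F(D^r(N_t^Hu))\), and summing gives the identity. The consequence for finite collections of jet equations follows immediately, since each equation is of this form. The boundary identities restrict in the same way, and \(v_t=\tfrac12v_{xx}\) restricts to \(u_t=\tfrac12u_{xx}\).

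\emph{Main obstacle.} The only subtle point is Step 1: the passage from \(N_{t,0}=N_t^H\) as operators to equality of their normal-ordered coefficients. I would settle it by uniqueness of normal form. Acting on the exponentials \(e^{zx}\) gives \(\sum_k q_k(x,t,0)z^k=\sum_k q_k^H(x,t)z^k\) for all \(z\), so the coefficients agree. The rest of the argument is bookkeeping.
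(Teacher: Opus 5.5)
Your proof is correct and is essentially the paper's argument: both rest on $N_{t,0}=N_t^H$ and on the fact that setting $s=0$ commutes with $x$-differentiation and with evaluation at $x=f(t,s)$. You carry this out at the level of normal-ordered coefficients and the Leibniz rule, which makes explicit the step $(N_{t,s}v)|_{s=0}=N_{t,0}u$ that the paper uses tacitly. One harmless slip: the coefficients $q_k(\cdot,t,0)$ vanish for $k$ beyond the order of $N_t^H$, which is $\max_j(\deg p_j+j)$ and can exceed $m$ (e.g.\ $L=x^2$), not for all $k>m$; this does not affect the argument.
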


\begin{proof}
Since
\[
P_{t,0}=P_t^H,
\]
we have
\[
N_{t,0}=N_t^H.
\]
Moreover,
\[
u(t,x)=v(t,0,x),
\qquad
F(t)=f(t,0).
\]
Because \(D\) differentiates only with respect to the spatial variable,
restriction to \(s=0\) commutes with \(D^r\). Hence
\[
\begin{aligned}
\left.
\gamma_f\!\left(
D^r(N_{t,s}v)
\right)
\right|_{s=0}
&=
\left.
D^r(N_{t,s}v)
\right|_{s=0,\;x=f(t,0)}
\\
&=
D^r(N_{t,0}u)
\big|_{x=F(t)}
\\
&=
D^r(N_t^Hu)
\big|_{x=F(t)}
\\
&=
\gamma_F\!\left(
D^r(N_t^Hu)
\right).
\end{aligned}
\]
This proves the identity for every \(r\geq0\).
\end{proof}

The proposition identifies the pure Heat spatial-jet hierarchy as the
restriction of the Heat--Airy hierarchy to the distinguished slice
\(s=0\). The enlarged problem, however, contains additional information.
Indeed, differentiation of the absorbing boundary condition along the
two commuting flows gives
\[
v^{(2)}=-2f_tv^{(1)},
\qquad
v^{(3)}=3f_sv^{(1)}.
\]
Higher differentiations generate further relations involving higher
spatial derivatives and mixed derivatives of \(f\). The identities
involving the \(s\)-flow have no counterpart in the one-parameter Heat
formulation.

This leads to an important distinction between the two compatibility
constructions. In the pure Heat theory, a finite collection of equations
\[
\gamma_F(N_t^Hu)=0,
\qquad
\gamma_F(DN_t^Hu)=0,
\qquad
\ldots
\]
must be combined with the Heat boundary identities and the relevant
spatial jets eliminated simultaneously. In the Heat--Airy setting, the
additional boundary relations generated by the \(s\)-flow may permit
part of this elimination to be performed successively. Information
encoded in a finite-dimensional compatibility condition in the Heat
theory may therefore be distributed among several levels of the
Heat--Airy hierarchy.

There is also a simple algebraic mechanism governing the appearance of
coefficients at successive levels. If
\[
N_{t,s}
=
\sum_{j=0}^{m}
q_j(x,t,s)D^j,
\]
then the Leibniz rule gives
\[
D^r(N_{t,s}v)
=
\sum_{j=0}^{m}
\sum_{\ell=0}^{r}
\binom{r}{\ell}
D^\ell q_j(x,t,s)\,
v^{(j+r-\ell)}.
\]
In particular, a zeroth-order term \(q_0v\) disappears from the first
boundary restriction because
\[
\gamma_f(v)=0.
\]
At the next spatial-jet level,
\[
D(q_0v)
=
(Dq_0)v+q_0v^{(1)},
\]
so that
\[
\gamma_f\!\left(D(q_0v)\right)
=
q_0(f,t,s)v^{(1)}.
\]
Thus a coefficient that is invisible at one level of the boundary
hierarchy may reappear at the next.

\begin{remark}
Proposition~\ref{prop:HierarchicalRecovery} is a restriction statement.
It does not assert that the Heat--Airy hierarchy always yields a smaller
or simpler compatibility system than the pure Heat formulation. Such a
reduction depends on the structure of the transported operator and on
the nondegeneracy of the boundary-jet relations. What the proposition
guarantees is that the pure Heat spatial-jet hierarchy is contained in
the Heat--Airy hierarchy through the slice \(s=0\), while the additional
commuting flow supplies further identities that may be used in the
elimination process.
\end{remark}

The following sparse third-order example shows that this additional
information can be decisive. The first Heat--Airy compatibility equation
will determine \(f_s(t,0)\), while the next spatial-jet equation will use
this quantity to recover the nonlinear compatibility equation that, in
the pure Heat theory, arises from a \(4\times4\) characteristic system.

\subsection{A linear-potential third-order operator}

Consider
\[
L_3
=
c_{3,0}D^3+c_{0,0}+c_{0,1}x,
\]
where
\[
c_{3,0}\neq0,
\qquad
c_{0,1}\neq0.
\]

This example is sufficiently simple to admit an explicit Heat--Airy
analysis, while retaining a nontrivial pure Heat compatibility equation.

\subsubsection{Heat--Airy transport}

Since
\[
X_{t,s}=x+tD-sD^2,
\]
we obtain
\[
\begin{aligned}
N_3
&=
c_{3,0}D^3
+c_{0,0}
+c_{0,1}X_{t,s}
\\
&=
c_{3,0}D^3
+c_{0,0}
+c_{0,1}x
+c_{0,1}tD
-c_{0,1}sD^2.
\end{aligned}
\]
Hence
\[
N_3
=
(c_{0,0}+c_{0,1}x)
+c_{0,1}tD
-c_{0,1}sD^2
+c_{3,0}D^3.
\]

Let \(v\) satisfy
\[
N_3v=0
\]
and suppose that
\[
v(t,s,f(t,s))=0.
\]
Restriction of the transported equation to \(x=f(t,s)\) gives
\[
(c_{0,0}+c_{0,1}f)v^{(0)}
+
c_{0,1}t\,v^{(1)}
-
c_{0,1}s\,v^{(2)}
+
c_{3,0}v^{(3)}
=
0.
\]
Since
\[
v^{(0)}=0,
\]
this reduces to
\[
c_{3,0}v^{(3)}
-
c_{0,1}s\,v^{(2)}
+
c_{0,1}t\,v^{(1)}
=
0.
\]

We retain explicitly the two basic boundary identities
\[
v^{(2)}+2f_tv^{(1)}=0,
\]
and
\[
v^{(3)}-3f_sv^{(1)}=0.
\]
Substitution into the transported equation gives, provided
\(v^{(1)}\neq0\),
\[
3c_{3,0}f_s
+
2c_{0,1}s f_t
+
c_{0,1}t
=
0.
\tag{HA$_1$}
\]

Notice that \(c_{0,0}\) does not appear in this first compatibility
equation. This is precisely the phenomenon described above: the
zeroth-order coefficient multiplies \(v^{(0)}\), which vanishes on the
absorbing boundary.

\subsubsection{Solution of the first compatibility equation}

Equation \((\mathrm{HA}_1)\) is a first-order linear partial differential
equation. Set
\[
\alpha
=
\frac{c_{0,1}}{3c_{3,0}}.
\]
Then
\[
f_s+2\alpha s f_t+\alpha t=0.
\]

Taking \(s\) as the characteristic parameter gives
\[
\frac{dt}{ds}=2\alpha s,
\qquad
\frac{df}{ds}=-\alpha t.
\]
The first equation gives
\[
t-\alpha s^2=\xi,
\]
where \(\xi\) is constant along each characteristic. Hence
\[
t=\xi+\alpha s^2.
\]
Therefore
\[
\frac{df}{ds}
=
-\alpha\xi-\alpha^2s^2,
\]
and integration yields
\[
f
=
F(\xi)
-\alpha\xi s
-\frac{\alpha^2}{3}s^3.
\]
Since
\[
\xi=t-\alpha s^2,
\]
we obtain
\[
f(t,s)
=
F(t-\alpha s^2)
-\alpha st
+\frac{2}{3}\alpha^2s^3.
\]

Equivalently,
\[
f(t,s)
=
F\left(
t-\frac{c_{0,1}}{3c_{3,0}}s^2
\right)
-
\frac{c_{0,1}}{3c_{3,0}}st
+
\frac{2c_{0,1}^2}{27c_{3,0}^2}s^3.
\tag{6.1}
\]

Setting \(s=0\) gives
\[
f(t,0)=F(t).
\]
Thus the arbitrary function appearing in the characteristic solution is
precisely the boundary on the Heat slice. In this example, the first
Heat--Airy compatibility equation therefore acts as an evolution equation
that propagates a given Heat boundary into the Airy direction.

Moreover, setting \(s=0\) directly in \((\mathrm{HA}_1)\) gives
\[
f_s(t,0)
=
-\frac{c_{0,1}}{3c_{3,0}}\,t.
\tag{6.2}
\]

\subsubsection{The second Heat--Airy compatibility equation}

The coefficient \(c_{0,0}\), absent from the first compatibility
equation, reappears at the next level of the spatial jet, in agreement
with the general mechanism described in
Subsection~\ref{subsec:HierarchicalRecovery}.

Since
\[
N_3v=0
\]
holds as an identity in \(x\), we also have
\[
D(N_3v)=0.
\]
Differentiating gives
\[
\begin{aligned}
0
=
D(N_3v)
={}&
c_{0,1}v
+
(c_{0,0}+c_{0,1}x)v^{(1)}
\\
&
+
c_{0,1}t\,v^{(2)}
-
c_{0,1}s\,v^{(3)}
+
c_{3,0}v^{(4)}.
\end{aligned}
\]
Restriction to the absorbing boundary gives
\[
c_{3,0}v^{(4)}
-
c_{0,1}s\,v^{(3)}
+
c_{0,1}t\,v^{(2)}
+
(c_{0,0}+c_{0,1}f)v^{(1)}
=
0.
\]

Before substituting, recall the three relevant boundary identities:
\[
v^{(2)}=-2f_tv^{(1)},
\]
\[
v^{(3)}=3f_sv^{(1)},
\]
and
\[
v^{(4)}
=
\left[
-4f_{tt}
+8(f_t)^3
-12f_tf_s
\right]v^{(1)}.
\]

Substitution therefore yields
\[
\begin{aligned}
0={}&
c_{0,0}
+c_{0,1}f
-2c_{0,1}t f_t
-3c_{0,1}s f_s
\\
&
-4c_{3,0}f_{tt}
+8c_{3,0}(f_t)^3
-12c_{3,0}f_tf_s.
\end{aligned}
\tag{HA$_2$}
\]

Thus the first two Heat--Airy compatibility equations are
\[
3c_{3,0}f_s
+
2c_{0,1}s f_t
+
c_{0,1}t
=
0
\]
and
\[
\begin{aligned}
0={}&
c_{0,0}
+c_{0,1}f
-2c_{0,1}t f_t
-3c_{0,1}s f_s
\\
&
-4c_{3,0}f_{tt}
+8c_{3,0}(f_t)^3
-12c_{3,0}f_tf_s.
\end{aligned}
\]

These two equations contain different information. The first determines
the evolution in the Airy direction, whereas the second introduces the
coefficient \(c_{0,0}\) and imposes a further compatibility condition.

\subsection{The corresponding Heat problem}

We now compare the preceding construction with transport by the Heat
semigroup alone. Let
\[
P_t
=
\exp\left(\frac{t}{2}D^2\right).
\]
Since
\[
P_txP_t^{-1}=x+tD,
\]
the Heat transport of
\[
L_3=c_{3,0}D^3+c_{0,0}+c_{0,1}x
\]
is
\[
N_3^H
=
(c_{0,0}+c_{0,1}x)
+c_{0,1}tD
+c_{3,0}D^3.
\]

Let \(F=F(t)\) be an absorbing boundary for the Heat problem. We use the
ordered boundary jet
\[
V_3^H
=
\begin{pmatrix}
v^{(4)}\\
v^{(3)}\\
v^{(2)}\\
v^{(1)}
\end{pmatrix}.
\]

The restriction of
\[
N_3^Hv=0
\]
to \(x=F(t)\) gives
\[
c_{3,0}v^{(3)}
+
c_{0,1}t\,v^{(1)}
=
0.
\tag{H$_1$}
\]

The absorbing boundary condition and the Heat equation give
\[
v^{(2)}+2F'v^{(1)}=0.
\tag{H$_2$}
\]

Differentiating this relation with respect to \(t\), while accounting for
the motion of the boundary, yields
\[
v^{(4)}
+
4F'v^{(3)}
+
4(F')^2v^{(2)}
+
4F''v^{(1)}
=
0.
\tag{H$_3$}
\]

Finally, differentiating the transported equation once with respect to
\(x\) and restricting to the boundary gives
\[
c_{3,0}v^{(4)}
+
c_{0,1}t\,v^{(2)}
+
(c_{0,0}+c_{0,1}F)v^{(1)}
=
0.
\tag{H$_4$}
\]

Thus \((\mathrm{H}_1)\)--\((\mathrm{H}_4)\) form the homogeneous system
\[
M_3^H(F;t)V_3^H=0,
\]
where
\[
M_3^H(F;t)
=
\begin{pmatrix}
0 & c_{3,0} & 0 & c_{0,1}t
\\
0 & 0 & 1 & 2F'
\\
1 & 4F' & 4(F')^2 & 4F''
\\
c_{3,0} & 0 & c_{0,1}t & c_{0,0}+c_{0,1}F
\end{pmatrix}.
\]

A nontrivial boundary jet requires
\[
\det M_3^H(F;t)=0.
\]
Direct expansion gives
\[
\det M_3^H
=
c_{3,0}
\left[
c_{0,0}
+c_{0,1}F
+2c_{0,1}tF'
+8c_{3,0}(F')^3
-4c_{3,0}F''
\right].
\]
Since \(c_{3,0}\neq0\), the Heat boundary satisfies
\[
c_{0,0}
+c_{0,1}F
+2c_{0,1}tF'
+8c_{3,0}(F')^3
-4c_{3,0}F''
=
0.
\tag{H}
\]

\subsection{Recovery of the Heat compatibility equation}

Proposition~\ref{prop:HierarchicalRecovery} guarantees that the pure Heat
spatial-jet equations are contained in the Heat--Airy hierarchy on the
slice \(s=0\). In the present example, something stronger occurs: the
additional Airy boundary identity permits the pure Heat compatibility
condition to be recovered sequentially from the first two levels of the
Heat--Airy hierarchy.

Let
\[
F(t)=f(t,0).
\]
The first Heat--Airy compatibility equation gives
\[
f_s(t,0)
=
-\frac{c_{0,1}}{3c_{3,0}}\,t.
\]
On the other hand, restricting \((\mathrm{HA}_2)\) to \(s=0\) gives
\[
\begin{aligned}
0={}&
c_{0,0}
+c_{0,1}F
-2c_{0,1}tF'
-4c_{3,0}F''
+8c_{3,0}(F')^3
\\
&
-12c_{3,0}F'f_s(t,0).
\end{aligned}
\]
Substitution of
\[
f_s(t,0)
=
-\frac{c_{0,1}}{3c_{3,0}}t
\]
gives
\[
-12c_{3,0}F'f_s(t,0)
=
4c_{0,1}tF'.
\]
Consequently,
\[
c_{0,0}
+c_{0,1}F
+2c_{0,1}tF'
+8c_{3,0}(F')^3
-4c_{3,0}F''
=
0,
\]
which is exactly the Heat compatibility equation \((\mathrm H)\).

We therefore obtain the following result.

\begin{proposition}
\label{prop:HeatRecoveryL3}
For
\[
L_3
=
c_{3,0}D^3+c_{0,0}+c_{0,1}x,
\]
the compatibility equation obtained from the pure Heat transport is
recovered sequentially from the first two Heat--Airy spatial-jet
compatibility relations on the slice \(s=0\).

More precisely, the first Heat--Airy relation determines
\[
f_s(t,0)
=
-\frac{c_{0,1}}{3c_{3,0}}t,
\]
and substitution of this identity into the second Heat--Airy relation at
\(s=0\) yields
\[
c_{0,0}
+c_{0,1}F
+2c_{0,1}tF'
+8c_{3,0}(F')^3
-4c_{3,0}F''
=
0,
\]
where
\[
F(t)=f(t,0).
\]
\end{proposition}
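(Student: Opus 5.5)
The proof will be a direct computation at two levels of the spatial-jet hierarchy, followed by a comparison with \((\mathrm H)\). Throughout, I assume \(v^{(1)}\neq0\) on the boundary, as the earlier derivations of the compatibility equations did.

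\emph{Step 1: the first level and \(f_s(t,0)\).} Take the first Heat--Airy compatibility relation \((\mathrm{HA}_1)\). It was obtained from \(\gamma_f(N_3v)=0\) together with the two boundary identities of Proposition~\ref{prop:BasicBoundaryRelations}. Evaluating it at \(s=0\) kills the term \(2c_{0,1}sf_t\). Since \(c_{3,0}\neq0\), we can solve to get \((6.2)\):
\[
f_s(t,0)=-\frac{c_{0,1}}{3c_{3,0}}t.
\]
This step uses only the identity \(v^{(3)}=3f_sv^{(1)}\), which comes from the Airy flow.

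\emph{Step 2: the second level.} Take \((\mathrm{HA}_2)\). It came from restricting \(D(N_3v)=0\) to the boundary and substituting the expressions for \(v^{(2)}\), \(v^{(3)}\) and \(v^{(4)}\) from Propositions~\ref{prop:BasicBoundaryRelations} and \ref{prop:FourthBoundaryDerivative}. Restrict it to \(s=0\):

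the term \(-3c_{0,1}sf_s\) vanishes;

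since restriction to \(s=0\) commutes with differentiation in \(t\), we have \(f_t(t,0)=F'\) and \(f_{tt}(t,0)=F''\);

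the only remaining Airy quantity is \(f_s(t,0)\), which appears in \(-12c_{3,0}f_tf_s\).

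Substituting Step 1 gives
\[
-12c_{3,0}F'f_s(t,0)=4c_{0,1}tF'.
\]
Adding this to \(-2c_{0,1}tF'\) produces \(+2c_{0,1}tF'\), and the result is exactly the displayed equation.

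\emph{Step 3: identification with the Heat equation.} To justify the claim that this is the equation ``obtained from the pure Heat transport'', I will check that it coincides with \((\mathrm H)\). That is, I expand \(\det M_3^H\) along its first column. The first column has entries only in rows 3 and 4, namely \(1\) and \(c_{3,0}\). The cofactor of the row-4 entry is
\[
c_{3,0}\det\!\begin{pmatrix}c_{3,0}&0&c_{0,1}t\\0&1&2F'\\4F'&4F'^2&4F''\end{pmatrix},
\]
and the row-3 entry gives the corresponding \(3\times3\) minor. Collecting these reproduces \(c_{3,0}[\,\cdots]\) as stated, and dividing by \(c_{3,0}\neq0\) gives \((\mathrm H)\).

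As a consistency check, Proposition~\ref{prop:HierarchicalRecovery} guarantees that \((\mathrm H_1)\) and \((\mathrm H_4)\) are the \(s=0\) restrictions of the two Heat--Airy levels. The nontrivial point is that eliminating \(v^{(3)}\) in the Heat system via \((\mathrm H_1)\), namely \(v^{(3)}=-(c_{0,1}t/c_{3,0})v^{(1)}\), is the same as using \(v^{(3)}=3f_sv^{(1)}\) with \((6.2)\).

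The main obstacle is the sign bookkeeping in Step 3. The determinant expansion and the sign flip of the \(tF'\) term must both come out right, so I would verify the cofactor expansion carefully against this elimination argument.
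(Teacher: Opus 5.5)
Your proposal is correct and is essentially the paper's proof. Steps 1 and 2 are exactly what the paper does: evaluate \((\mathrm{HA}_1)\) at \(s=0\) to get \((6.2)\), restrict \((\mathrm{HA}_2)\) to \(s=0\), and use \(-12c_{3,0}F'f_s(t,0)=4c_{0,1}tF'\) to turn \(-2c_{0,1}tF'\) into \(+2c_{0,1}tF'\); Step 3 only re-checks the expansion of \(\det M_3^H\) that the paper already does in the preceding subsection, and there you must keep the sign \((-1)^{4+1}\) on the row-4 term, giving \(\det M_3^H=c_{3,0}(c_{0,0}+c_{0,1}F-2c_{0,1}tF')-c_{3,0}\bigl[c_{3,0}(4F''-8(F')^3)-4c_{0,1}tF'\bigr]\), which is indeed \(c_{3,0}\) times the left side of \((\mathrm H)\).
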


The distinction between
Propositions~\ref{prop:HierarchicalRecovery} and
\ref{prop:HeatRecoveryL3} is worth emphasizing. The former is a general
restriction principle: the pure Heat spatial-jet hierarchy is recovered
on \(s=0\). The latter exhibits an additional phenomenon specific to the
present operator: the Airy-flow boundary identity makes it possible to
recover the pure Heat determinant condition through a successive
elimination involving only the first two Heat--Airy levels.

\subsection{Heat boundaries as initial data for the Airy flow}

The preceding results can also be read in the opposite direction.

Suppose that \(F(t)\) is a solution of the Heat compatibility equation
\[
c_{0,0}
+c_{0,1}F
+2c_{0,1}tF'
+8c_{3,0}(F')^3
-4c_{3,0}F''
=
0.
\]
Then the first Heat--Airy compatibility equation
\[
3c_{3,0}f_s
+
2c_{0,1}s f_t
+
c_{0,1}t
=
0
\]
can be solved with initial datum
\[
f(t,0)=F(t).
\]
The characteristic formula gives
\[
f(t,s)
=
F\left(
t-\frac{c_{0,1}}{3c_{3,0}}s^2
\right)
-
\frac{c_{0,1}}{3c_{3,0}}st
+
\frac{2c_{0,1}^2}{27c_{3,0}^2}s^3.
\]

Thus the Heat and Heat--Airy constructions provide complementary
information:
\[
\boxed{
\begin{aligned}
\text{Heat compatibility}
&\quad\Longrightarrow\quad
F(t),
\\
\text{Heat--Airy evolution}
&\quad\Longrightarrow\quad
f(t,s)
\quad\text{with}\quad
f(t,0)=F(t).
\end{aligned}
}
\]

The additional Airy variable therefore does not replace the information
contained in the Heat problem. Instead, it organizes that information
within a higher-flow hierarchy and provides an explicit propagation of
the Heat boundary into an additional evolution direction. Conversely,
the restriction principle shows that the pure Heat spatial-jet equations
remain embedded in the enlarged hierarchy, while the present third-order
example demonstrates that the additional Airy identities may permit
their compatibility condition to be recovered sequentially.

This suggests a broader higher-flow principle. Adjoining commuting
higher-order evolutions may reorganize moving-boundary compatibility
conditions into a hierarchy in which different pieces of the original
one-parameter problem appear at successive jet levels. The systematic
study of this phenomenon for arbitrary polynomial differential operators
and additional commuting flows lies beyond the scope of the present
paper.

\section{Conclusion}
\label{sec:Conclusion}

We have developed a two-parameter extension of the classical heat
transport of differential operators by means of the commuting semigroup
\[
P_{t,s}
=
\exp\left(
\frac{t}{2}D^2-\frac{s}{3}D^3
\right).
\]
The corresponding transported position operator,
\[
X_{t,s}
=
x+tD-sD^2,
\]
provides the basic algebraic mechanism of the construction.

At the polynomial level, this transport gives rise to the Heat--Airy
polynomials
\[
\mathcal H_n(x,t,s)
=
P_{t,s}(x^n),
\]
together with a triangular family of normal-ordering coefficients
\[
X_{t,s}^n
=
\sum_{k=0}^{2n}
T_{n,k}(x,t,s)D^k.
\]
The first column of this array recovers the Heat--Airy polynomial
sequence, while the remaining coefficients encode the noncommutative
corrections required for the transport of polynomial differential
operators.

The second part of the paper concerns moving absorbing boundaries. For
solutions of
\[
v_t=\frac12v_{xx},
\qquad
v_s=-\frac13v_{xxx},
\]
the condition
\[
v(t,s,f(t,s))=0
\]
generates a hierarchy of identities among the spatial boundary jets of
\(v\). When these identities are combined with the boundary restrictions
of transported differential equations,
\[
\gamma_f\!\left(D^k(N_nv)\right)=0,
\qquad k\geq0,
\]
they produce differential compatibility conditions for the moving
boundary \(f\).

For second-order operators, this construction leads naturally to a
finite characteristic system. Its determinant yields a nonlinear
boundary equation whose restriction to \(s=0\) recovers the corresponding
classical Heat compatibility condition. Explicit examples show that the
first boundary equation may leave functional or parametric freedom that
is resolved by subsequent equations in the spatial jet. In particular,
higher compatibility relations determine the initial deformation of a
Heat boundary in the Airy direction.

The third-order example
\[
L_3
=
c_{3,0}D^3+c_{0,0}+c_{0,1}x
\]
makes this mechanism especially transparent. The first Heat--Airy
compatibility equation determines an explicit evolution of the boundary
away from the Heat slice,
\[
f(t,s)
=
F\left(
t-\frac{c_{0,1}}{3c_{3,0}}s^2
\right)
-
\frac{c_{0,1}}{3c_{3,0}}st
+
\frac{2c_{0,1}^2}{27c_{3,0}^2}s^3,
\]
where
\[
F(t)=f(t,0).
\]
At the same time, the first two Heat--Airy spatial-jet compatibility
relations, restricted to \(s=0\), recover precisely the nonlinear
compatibility equation obtained independently from the pure Heat
problem.

This example suggests that the additional commuting flow reorganizes,
rather than merely enlarges, the moving-boundary compatibility problem.
Information that in the one-parameter Heat theory is obtained through
the elimination of a larger spatial jet may appear, in the two-parameter
setting, across successive equations of a hierarchy.

The present framework naturally points toward further extensions. One
may consider additional commuting flows generated by higher powers of
\(D\), leading formally to operators of the form
\[
\exp\left(
\frac{t_2}{2}D^2
-\frac{t_3}{3}D^3
+\frac{t_4}{4}D^4
-\cdots
\right),
\]
with moving boundaries depending on several evolution parameters. The
associated normal-ordering problem and the corresponding hierarchy of
boundary compatibility equations suggest a broader higher-flow theory
for polynomial differential operators and absorbing boundaries. We leave
this development for future work.

\end{document}